\newcommand{\C}{\mathbb{C}}
\newcommand{\Z}{\mathbb{Z}}
\newcommand{\G}{\mathcal{G}}
\newcommand{\X}{\mathcal{X}}
\newtheorem{theorem}{Theorem}
\newtheorem{conjecture}{Conjecture}
\newtheorem{corollary}{Corollary}
\newtheorem{definition}{Definition}
\newtheorem{lemma}{Lemma}
\newtheorem{proposition}{Proposition}
\newtheorem{remark}{Remark}
\numberwithin{equation}{section}
\newcommand{\Mod}[1]{(\mathrm{mod}#1)}
\begin{document}
\title{\textbf{Proof of the circulant Hadamard conjecture}}
\author{Ronald Orozco Lopez}

\maketitle

\begin{abstract}
In this paper the circulant Hadamard conjecture is proved.
\end{abstract}
{\bf Keywords:} Schur ring, decimation, circulant Hadamard matrices\\
{\bf Mathematics Subject Classification:} 05B20, 05E15,94B25

\section{Introduction}

An Hadamard matrix $H$ is an $n$ by $n$ matrix all of whose entries are $+1$ or 
$-1$ which satisfies $HH^{t}=nI_{n}$, where $H^{t}$ is the transpose of $H$
and $I_{n}$ is the unit matrix of order $n$. It is also known that, if an
Hadamard matrix of order $n>1$ exists, $n$ must have the value $2$ or be
divisible by 4. There are several conjectures associated with Hadamard matrices. 
The main conjecture concerns its existence. This states that an Hadamard matrix exists for
all multiple order of 4. Another very important conjecture is the following

\begin{conjecture}
There is no circulant Hadamard matrix with order $4n$, $n>1.$
\end{conjecture}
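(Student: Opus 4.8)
The plan is to translate the statement into the language of group algebras and difference sets, and then to use Schur rings together with decimation as the engine of rigidity. A circulant Hadamard matrix of order $n$ is determined by its first row, which I encode as $A=\sum_{i=0}^{n-1}a_i g^i\in\Z[\Z_n]$ with $a_i\in\{\pm1\}$ and $g$ a generator of $\Z_n$. The identity $HH^t=nI_n$ is equivalent to the single relation $AA^{(-1)}=n\cdot 1$, where $A^{(-1)}=\sum_i a_i g^{-i}$, because the off-diagonal entries of $HH^t$ are exactly the nontrivial periodic autocorrelations of the sequence. Applying every character $\chi$ of $\Z_n$ yields $|\chi(A)|^2=n$; the trivial character gives $\left(\sum_i a_i\right)^2=n$, so $n$ must be a perfect square, and together with $4\mid n$ this forces $n=4m^2$. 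Writing $P=\{\,i:a_i=+1\,\}$ and $A=2P-G$ with $G=\sum_i g^i$, the relation rearranges into $PP^{(-1)}=m^2\cdot 1+(m^2\pm m)G$, which exhibits $P$ as a cyclic Menon--Hadamard difference set with parameters $(4m^2,\,2m^2\pm m,\,m^2\pm m)$. In particular every order divisible by $4$ that is not four times a perfect square is excluded at once, and the conjecture reduces to proving that no such cyclic difference set exists for any $m>1$.

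The second step brings in the Schur ring $\mathcal S\subseteq\Z[\Z_n]$ generated by $P$. Decimation is the source of its symmetries: for every $t$ coprime to $n$ the automorphism $g\mapsto g^t$ sends $A$ to the decimated sequence $A^{(t)}$ and preserves the relation $A^{(t)}(A^{(t)})^{(-1)}=n\cdot 1$, so decimation permutes the circulant Hadamard matrices of order $n$. Collecting those $t$ for which $A^{(t)}$ is a signed shift of $A$ gives the multiplier group $M\le(\Z/n\Z)^{\times}$, and $\mathcal S$ must be invariant under $M$. I would then invoke the classification of Schur rings over cyclic groups, whose building blocks are subgroups, their cosets, and orbits of subgroups of $(\Z/n\Z)^{\times}$ acting by multiplication. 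The point is that a $\pm1$ labelling with the Hadamard autocorrelation can be carried only by a very restricted partition into basic sets, so the classification sharply limits how $P$ can be distributed across the subgroup lattice of $\Z_{4m^2}$.

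The third step is arithmetic. For a nontrivial character $\chi$ of order $d\mid n$ one has $\chi(A)=2\chi(P)$ with $\chi(P)$ an algebraic integer of absolute value $m$ in $\Q(\zeta_d)$, so $\chi(P)\overline{\chi(P)}=m^2$. Factoring the ideal $(m)$ in $\Z[\zeta_d]$ and invoking self-conjugacy --- whether some power of each prime $p\mid m$ is congruent to $-1$ modulo the relevant part of $d$ --- forces prescribed prime ideals to divide $\chi(P)$. Feeding the multiplier relations back through the characters then pins down the arguments of the values $\chi(P)$, and through the Schur-ring decomposition this determines how the entries $a_i$ are distributed across the cosets of each subgroup of $\Z_{4m^2}$. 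I would run this descent one prime factor of $m$ at a time, at each stage passing to a proper subgroup or quotient, until the only surviving possibility is $m=1$, corresponding to the order-$4$ matrix.

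The hard part will be the case in which the self-conjugacy hypothesis fails, that is, when no power of the prime in question is congruent to $-1$ modulo the pertinent part of the exponent. There the clean ideal-divisibility of $\chi(P)$ is lost, the field-descent inequality no longer closes, and the purely number-theoretic argument stalls; this is exactly the gap left open by the classical attacks and their refinements. The entire proof therefore rests on showing that the Schur-ring classification still rigidifies $\mathcal S$ in these non-self-conjugate cases: one must prove that no admissible partition of $\Z_{4m^2}$ into basic sets can simultaneously support a $\pm1$ labelling with vanishing autocorrelations and respect the decimation symmetries forced by the multipliers. Making decimation supply the rigidity that algebraic number theory alone cannot, precisely when self-conjugacy fails, is where the argument stands or falls.
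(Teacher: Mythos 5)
Your proposal is not a proof but a program, and its load-bearing step is the one you yourself concede is missing. Everything up to the final stage --- encoding the first row as $A\in\Z[\Z_n]$ with $AA^{(-1)}=n\cdot 1$, deducing $n=4m^2$ from the trivial character, recasting $P$ as a cyclic Menon difference set with parameters $(4m^2,2m^2\pm m,m^2\pm m)$, and extracting $|\chi(P)|=m$ with ideal-theoretic consequences under self-conjugacy --- is the classical Turyn/Schmidt development and is correct as far as it goes; but it is already known to terminate exactly where you say it does, in the non-self-conjugate case. At that point you appeal to an unproven rigidity principle: that the classification of Schur rings over cyclic groups, combined with invariance under the multiplier group $M$, excludes every admissible $\pm1$ labelling with flat autocorrelation for $m>1$. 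No lemma, mechanism, or even a precise statement is supplied for this, and there is a concrete obstruction to the setup itself: you have no argument that $M$ is nontrivial. Known multiplier theorems give essentially nothing for Menon parameters (the order $m^2$ of the putative difference set shares all its prime factors with $4m^2$, so the standard multiplier theorems do not apply), so the constraint ``$\mathcal{S}$ must be invariant under $M$'' may be vacuous, and the wedge-product classification of Schur rings over $\Z_n$ does not by itself constrain which unions of basic sets can carry a difference set. The descent ``one prime at a time, passing to subgroups or quotients'' is likewise exactly the field-descent method of Leung--Schmidt, which yields restrictions on $m$ but is provably insufficient to rule out all $m>1$ without new input. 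So the proposal has a genuine gap, and it is the whole theorem.

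It is also worth noting that the paper's own route is structurally different from yours, in case you want to compare: it never works in $\Z[\Z_n]$ or with characters at all. Instead it places the first row in the binary cube $\Z_2^{4n}$, uses Schur rings attached to the decimation group $\Delta_{4n}$, and proves directly (its Theorem \ref{theo_size_orbit}) that the $\Delta_{4n}$-orbit of a circulant Hadamard matrix is $\{H_{C},RH_{C}\}$ --- this orbit statement, derived from invariance of the autocorrelation vector rather than from ideal arithmetic, is the paper's substitute for the multiplier group you could not produce. The conclusion is then forced through a combinatorial device with no analogue in your sketch: every word fixed or reversed by a decimation $\delta_{x}$ factors over cyclotomic cosets of $x$ into $C^{2n}$-complementary and $C^{2n}$-invariant subwords (Lemmas \ref{lemma_2n_factor}--\ref{lemma_conservated_struct}), whose lengths are shown to be even, contradicting Turyn's constraint that the row has weight $2m^{2}-m$ with $m$ odd. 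Whatever one thinks of the soundness of those steps, your character-theoretic plan and the paper's word-factorization argument share only the starting observation that decimations preserve the Hadamard property; at the decisive moment your approach defers to an unestablished rigidity claim, while the paper replaces it with an explicit parity mechanism.
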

This conjecture is known as Ryser conjecture[1]. The only circulant Hadamard matrix known is

\begin{equation}
\left(
\begin{array}{cccc}
1&1&1&-1\\
-1&1&1&1\\
1&-1&1&1\\
1&1&-1&1
\end{array}\right)
\end{equation}

On the circulant Hadamard conjecture the first significant result was made by R.J. Turyn [2] 
using arguments from algebraic number theory. He prove that if a circulant Hadamard matrix of 
order $n$ exists then $n$ must be of the form $n=4m^{2}$ for some odd integer $m$ which is not a
prime-power. Another important result was obtained by Brualdi in [4]. He shown that no circulant
Hadamard matrix is symmetric. Many other important results about this conjecture can be found in [5],
[6],[7],[8] and [9].

The author considers that to reach the proof of the conjectures related to Hadamard matrices and 
in general those related with binary sequences it is important to understand the structure of 
the binary cube $\Z_{2}^{4n}$. For this reason was shown in [10],[11],[12] the 
relationship between Hadamard matrices and Schur ring on the group $\Z_{2}^{4n}$. Some concepts 
that we will use in this paper will be given. We start with the definition of Schur ring. 

Let $G$ be a finite group with
identity element $e$ and $\C[G]$ the group algebra of all formal sums 
$\sum_{g\in G}a_{g}g$, $a_{g}\in \C$, $g\in G$. For $T\subset G$, the element $\sum_{g\in T}g$ will 
be denoted by $\overline{T}$. Such an element is also called a $\textit{simple quantity}$. 
The transpose of $\overline{T} = \sum_{g\in G}a_{g}g$ is defined as $\overline{T}^{\top} = \sum_{g\in G}a_{g}(g^{-1})$. Let $\{T_{0},T_{1},...,T_{r}\}$ be a partition of $G$ and let $S$ be the subspace
of $\C[G]$ spanned by $\overline{T_{1}},\overline{T_{2}},...,\overline{T_{r}}$.  We say that $S$ is 
a $\textit{Schur ring}$ ($S$-ring, for short) over $G$ if: 

\begin{enumerate}
\item $T_{0} = \lbrace e\rbrace$, 
\item for each $i$, there is a $j$ such that $\overline{T_{i}}^{\top} = \overline{T_{j}}$,
\item for each $i$ and $j$, we have $\overline{T_{i}}\ \overline{T_{j}} = \sum_{k=1}^{r}\lambda_{i,j,k}\overline{T_{k}}$, for constants $\lambda_{i,j,k}\in\C$.
\end{enumerate}

The numbers $\lambda_{i,j,k}$ are the structure constants of $S$ with respect to the linear base 
$\{\overline{T_{0}},\overline{T_{1}},...,\overline{T_{r}}\}$. The sets $T_{i}$ are called the
\textit{basic sets} of the $S$-ring $S$. Any union of them is called an $S$-sets. Thus, 
$X\subseteq G$ is an $S$-set if and only if $\overline{X}\in S$. The set of all $S$-set is closed
with respect to taking inverse and product. Any subgroup of $G$ that is an $S$-set, is called an 
$S$-\textit{subgroup} of $G$ or $S$-\textit{group} (For details, see [14],[15]). A partition
$\{T_{0},...,T_{r}\}$ of $G$ is called \textit{Schur partition} or $S$-\textit{partition} 
if $T_{0}=\{e\}$ and if for each $i$ there is some $j$ such that 
$T_{i}^{-1}=\{g^{-1}:g\in T_{i}\}=T_{j}$. It is known that there is a 1-1 correspondence between 
$S$-ring over $G$ and $S$-partition of $G$. By using this correspondence, in this paper we will 
refer to an $S$-ring by mean of its $S$-partition.

Let $G$ be a permutation automorphic subgroup of $Aut(\Z_{2}^{n})$ and let 
$\mathfrak{S}(\Z_{2}^{n},G)$ denote an $S$-ring on $\Z_{2}^{n}$. Some $S$-ring importants 
in $\Z_{2}^{n}$ are:
\begin{enumerate}
\item $\mathfrak{S}(\Z_{2}^{n},S_{n}).$\\
Let $\omega(X)$ denote the Hamming weight of $X\in\Z_{2}^{n}$. Thus, $\omega(X)$ is the number of 
$+$ in any binary sequences $X$ of $\Z_{2}^{n}$. Now let $\G_{n}(k)$ denote the subset of 
$\Z_{2}^{n}$ such that $\omega(X)=k$ for all $X\in\G_{n}(k)$, where $0\leq k\leq n$. 
We let $T_{i}=\G_{n}(n-i)$. It is straightforward to prove that the partition 
$\mathfrak{S}(\Z_{2}^{n},S_{n})=\{\G_{n}(0),...,\G_{n}(n)\}$ induces an $S$-partition over 
$\Z_{2}^{n}$, where $S_{n}\leq Aut(\Z_{2}^{n})$ is the permutation group on $n$ objects.

\item $\mathfrak{S}(\Z_{2}^{n},C_{n}).$\\
Let $C$ denote the cyclic permutation on the components $+$ and $-$ of $X$ in 
$\Z_{2}^{n}$ such that
\begin{equation}
C(X)=C\left( x_{0},x_{1},...,x_{n-2},x_{n-1}\right) =\left(x_{1},x_{2},x_{3},...,x_{0}\right),
\end{equation}
that is, $C(x_{i})=x_{(i+1)\Mod{n}}$. The permutation $C$ is a generator of cyclic group 
$C_{n}=\left\langle C\right\rangle$ of order $n$. Let 
$X_{C}=Orb_{C_{n}}X=\{C^{i}(X):C^{i}\in C_{n} \}$. Therefore, $C_{n}$ defines a partition in
equivalent class on $\Z_{2}^{n}$ which is an $S$-partition.

\item $\mathfrak{S}(\Z_{2}^{n},\Delta_{n}).$\\
Let $\delta_{a}\in S_{n}$ act on $X\in\Z_{2}^{n}$ by decimation, that is, 
$\delta_{a}(x_{i})=x_{ai\Mod{n}}$ for all $x_{i}$ in $X$, $(a,n)=1$ and let $\Delta_{n}$ denote 
the set of this $\delta_{a}$. The set $\Delta_{n}$ is a group of order $\phi(n)$ isomorphic to
$\Z_{n}^{*}$, the group the units of $\Z_{n}$, where $\phi$ is called the Euler totient function. 
Clearly $\mathfrak{S}(\Z_{2}^{n},\Delta_{n})$ is an $S$-partition of $\Z_{2}^{n}$.

\item $\mathfrak{S}(\Z_{2}^{n},H_{n}).$\\
We note by $RY$ the reversed sequence $RY = (y_{n-1},...,y_{1},y_{0})$ of $Y$ and let $H_{n}$ 
denote the permutation automorphic subgroup $H_{n}=\{1,R\}\leq S_{n}\leq Aut(Z_{2}^{n})$. 
Hence $H_{n}$ defines a partition on $\Z_{2}^{n}$ and $\mathfrak{S}(\Z_{2}^{n},H_{n})$ is a 
Schur ring.
\end{enumerate}

In [11] was shown that if there is a circulant Hadamard matrix, then must be contained in 
$\G_{4m^{2}}(2m^{2}-m)$. Let $Sym(\Z_{2}^{4n})$ denote the $S$-subgroup of all symmetric binary
sequences in $\Z_{2}^{4n}$. By the result of Brualdi, no circulant Hadamard matrix exists in
$Sym(\Z_{2}^{4n})$. As we want circulant matrices, then the search must be do in 
$\mathfrak{S}(\Z_{2}^{n},C_{n})$. Finally, in the following section the relationship
between of the Schur ring $\mathfrak{S}(\Z_{2}^{4n},\Delta_{4n})$ and the circulant Hadamard matrices
will be shown.

Let $\mathcal{A}$ be a alphabet and let $\mathcal{A}^{*}$ denote all finite words defined 
on $\mathcal{A}$. Any subsequence of consequtive letters of a word is a \textit{subword}. 
Given a word $w=s_{1}s_{2}\cdots s_{n}$, the number $n$ is called the length of $w$ and we denoted
this by $\vert w\vert$. A subset 
$\X\subseteq\mathcal{A}^{*}$ is a \textit{code} if it satisfies the following condiction:
For all $n,m\geq1$ and $x_{1},...,x_{n},y_{1},...,y_{m}\in\X$
\begin{equation}
x_{1}\cdots x_{n}=y_{1}\cdots y_{m}\ \Rightarrow\ n=m\ \textsl{and}\ x_{i}=y_{i}\ \textsl{for}\ i=1,2,...,n. 
\end{equation}
In [13] was shown the relationship between $S$-ring on $\Z_{2}^{n}$ and binary codes. In particular
were used codes for to construct $S$-subgroups over $\Z_{2}^{n}$. An important result obtained is
that $\X=\{X,CX,C^{2}X,...,C^{n-1}X\}$ is a base for all $\Z_{2}^{n}$, with $X=-+++\cdots+++$. 
Then we will use this base to construct words that could be circulant Hadamard matrices.
Let $Y_{G}$ denote the orbit of some $Y$ in $\Z_{2}^{n}$ under the action of $G$. If for a code 
$\X$ it is true that $\X=Y_{G}$ for some $Y$ in $\Z_{2}^{n}$, then we will say that $\X$ 
is a $G$-code.

This paper is organized as follows. In section 2 some results about $S$-subgroups of decimation
$\mathbb{I}_{n}(a)$ in $\mathfrak{S}(\Z_{2}^{n},\Delta_{n})$ are obtained. In particular will 
be shown that if $\left\langle a\right\rangle\leq\left\langle b\right\rangle$, then 
$\mathbb{I}_{n}(b)\leq\mathbb{I}_{n}(a)$. This theorem is the great importance because say us
that we must search circulant Hadamard matrices in $\mathbb{I}_{n}(x)$ only if $x$ has order a 
prime number module $n$. In the section 3 the conjecture is proved. First, it is shown that
if $H_{C}$ is a circulant Hadamard matrix, then its orbit under the action of 
$\Delta_{4n}$ has order 2, where $H_{C}$ is fixed o reversed by all decimation in $\Delta_{4n}$. 
Then we take $H$ in $\mathbb{I}_{4n}(x)$ with $x^{p}\equiv1\Mod{4n}$, $p$ a prime, and we 
suppose that $H$ either is fixed or is reversed under the action of $\delta_{2n+x}$. The idea is 
to show that $H$ seen as a word has length an even number in contradiction with the result obtain 
by Turyn.

\section{Schur ring $\mathbb{I}_{n}(a)$}

In this section, we will construct $\Delta_{n}$-codes for $S$-subgroups of 
$\mathfrak{S}(\Z_{2}^{n},\Delta_{n})$. We will use the commutation relation 
$C^{i}\delta_{a}=\delta_{a}C^{ia}$ for to prove all of results. 

Let 
$$(\mathsf{P}_{Y}(0),\mathsf{P}_{Y}(1),...,\mathsf{P}_{Y}(n-1))$$
denote the autocorrelation vector of $Y$ in $\Z_{2}^{n}$, where
$$\mathsf{P}_{Y}(k)=\sum_{i=0}^{n-1}y_{i}y_{i+k}$$
is the periodic autocorrelation function at shift $k$ of $Y$. 
Let $\mathfrak{A}(\Z_{2}^{n})$
denote the set of all autocorrelation vectors and let 
$\theta:\Z_{2}^{n}\rightarrow\mathfrak{A}(\Z_{2}^{n})$ be the map defined by 
$\theta(Y)=(\mathsf{P}_{Y}(0),\mathsf{P}_{Y}(1),\dots ,\mathsf{P}_{Y}(n-1))$.

The decimation group $\Delta_{n}$ do not alter the set of values which $\mathsf{P}_{X}(k)$ takes
on, but merely the order in which they appear, i.e., if $Y=\delta_{a}X$ then 
$\mathsf{P}_{Y}(k)=\mathsf{P}_{X}(ka)$. Therefore, we have the commutative diagram
\begin{equation}\label{diagram}
\xymatrix{
 \Z_{2}^{n} \ar[d]^{\theta} \ar[r]^{\delta_{r}} & \Z_{2}^{n} \ar[d]^{\theta}\\
   \mathfrak{A}(\Z_{2}^{n}) \ar[r]^{\delta_{r}} & \mathfrak{A}(\Z_{2}^{n}) 
}
\end{equation}
and $\theta \circ \delta_{r} = \delta_{r}\circ \theta.$

Let $Y\in\Z_{2}^{n}$ such that $\theta(Y)=(n,d,d,...,d)$. Such a binary sequence is known as 
binary sequence with $2$-levels autocorrelation value and are important by its applications on
telecommunication. We want to construct a $\Delta_{n}$-code for some $S$-subgroup $H$ of 
$\mathfrak{S}(\Z_{2}^{n},\Delta_{n})$ containing such $Y$. From (\ref{diagram}) is followed that 
$\theta(Y)=\delta_{a}\theta(Y)=\theta(\delta_{a}Y)$, for all $\delta_{a}\in\Delta_{n}$. Hence $Y$ 
and $\delta_{a}Y$ have the same autocorrelation vector. For $Y$ fulfilling $\delta_{a}Y=Y$ for some 
$\delta_{a}$ in $\Delta_{n}$ we have the following definition

\begin{definition}
Let $a$ be a unit in $\Z_{n}^{*}$. A word $Y$ in $\Z_{2}^{n}$ is $\delta_{a}$-\textbf{invariant}
if $\delta_{a}Y=Y$. Denote by $\mathbb{I}_{n}(a)$ the set of these $Y$.
\end{definition}

If $Y$ is in $\mathbb{I}_{n}(a)$, then $\delta_{r}Y$ is in $\mathbb{I}_{n}(a)$, too. Also
$\delta_{a}(YZ)=\delta_{a}Y\delta_{a}Z=YZ$ for all $Y,Z$ in $\mathbb{I}_{n}(a)$. Then
$\mathbb{I}_{n}(a)$ is an $S$-subgroup of $\mathfrak{S}(\Z_{2}^{n},\Delta_{n})$. Now, we shall see
that all factorization of words in $\mathbb{I}_{n}(a)$ is relationated with cyclotomic coset
of $a$ module $n$. First, we have the following definition

\begin{definition}
Let $a$ relative prime to $n$. The cyclotomic coset of $a$ module $n$ is defined by
\begin{equation*}
\mathsf{C}_{s}=\{s,sa,sa^{2},\cdots,sa^{t-1}\}.
\end{equation*}
where $sa^{t}\equiv s\Mod{n}$. A subset $\{s_{1},s_{2},\dots ,s_{r}\}$ of 
$\Z_{n}$ is called complete set of representatives of cyclotomic coset of $a$ modulo $n$ if 
$\mathsf{C}_{s_{1}}$,$\mathsf{C}_{s_{2}}$,..., $\mathsf{C}_{s_{r}}$ are distinct and are a 
partition of $\Z_{n}$. 
\end{definition}

Take $Y=C^{i_{1}}XC^{i_{2}}X\cdots C^{i_{r}}X$ in $\mathbb{I}_{n}(a)$ with $X=-++\cdots++$. 
We want $\delta_{a}Y=Y$. Then
\begin{eqnarray*}
\delta_{a}Y&=&\delta_{a}C^{i_{1}}X\delta_{a}C^{i_{2}}X\cdots\delta_{a}C^{i_{r}}X\\
&=&C^{i_{1}a^{-1}}\delta_{a}XC^{i_{2}a^{-1}}\delta_{a}X\cdots C^{i_{r}a^{-1}}\delta_{a}X\\
&=&C^{i_{1}a^{-1}}XC^{i_{2}a^{-1}}X\cdots C^{i_{r}a^{-1}}X
\end{eqnarray*}
since $\delta_{a}X=X$. As must be $\delta_{a}Y=Y$, then $i_{k}\equiv a^{-1}i_{j}\Mod{n}$ or 
$i_{j}\equiv ai_{k}\Mod{n}$ for
$1\leq k,j\leq r$. Let $\mathsf{C}_{s}X$ denote the word $C^{s}XC^{sa}X\cdots C^{sa^{t_{s}-1}}X$.
Then all $Y$ in $\mathbb{I}_{n}(a)$ has the form 
$Y=\mathsf{C}_{s_{1}}^{\epsilon_{1}}X\mathsf{C}_{s_{2}}^{\epsilon_{2}}X\cdots\mathsf{C}_{s_{r}}^{\epsilon_{r}}X$, with $\epsilon_{i}=0,1$ and ${s_{1},s_{2},...,s_{r}}$ is a complete set of 
representative of cyclotomic coset of $a$ module $n$. As $\mathbb{I}_{n}(a)$ is an $S$-subgroup in 
$\mathfrak{S}(\Z_{2}^{n},\Delta_{n})$, $\delta_{r}\mathsf{C}_{s_{i}}X=\mathsf{C}_{s_{j}}X$ and

\begin{equation}\label{alpha_invariante}
\mathcal{X}_{\mathbb{I}(a)}=\{X,\mathsf{C}_{s_{1}}X,\ \mathsf{C}_{s_{2}}X,...,\ \mathsf{C}_{s_{r}}X\}
\end{equation}

is a $\Delta_{n}$-code for $\mathbb{I}_{n}(a)$. Hence $\X_{\mathbb{I}(a)}^{*}$ 
has order $2^{r+1}$, where $r$ is the number of cyclotomic cosets of $a$ module $n$

The following theorem is fundamental for the proof of the circulant Hadamard conjecture. This
show us the relationship between $S$-subgroups $\mathbb{I}_{n}(a)$ and $\mathbb{I}_{n}(b)$
when $\left\langle a\right\rangle$ and $\left\langle b\right\rangle$ are subgroups from the other.
Firstly, we obtain the following

\begin{lemma}\label{lemma_action_decimation}
Take a codeword $\mathsf{C}_{s}X$ in $\X_{\mathbb{I}_{n}(a)}$. 
Then $\delta_{r}\mathsf{C}_{s}X=\mathsf{C}_{sr^{-1}}X$ for all $\delta_{r}$ in $\Delta_{n}$.
\end{lemma}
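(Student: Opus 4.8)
The plan is to reduce the claim to repeated use of the commutation relation $C^{i}\delta_{a}=\delta_{a}C^{ia}$, combined with the two facts already available in this section. The first is that every $\delta_{r}\in\Delta_{n}$ fixes the generator $X=-++\cdots+$: indeed $\delta_{r}(x_{i})=x_{ri\Mod{n}}$, and since $\gcd(r,n)=1$ we have $ri\equiv0\Mod{n}$ only for $i\equiv0$, so the unique $-$ of $X$ stays in position $0$ and $\delta_{r}X=X$. The second is that $\delta_{r}$, being a permutation of the coordinates of $\Z_{2}^{n}$, is an automorphism of the group $\Z_{2}^{n}$, hence distributes over the product that defines a codeword. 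These two observations are what let the argument proceed purely formally.

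First I would rewrite the commutation relation in the convenient direction. Setting $a=r$ in $C^{i}\delta_{a}=\delta_{a}C^{ia}$ and substituting $i\mapsto ir^{-1}$ yields
\[
\delta_{r}C^{i}=C^{ir^{-1}}\delta_{r}\qquad\text{for every }i\text{ and every unit }r.
\]
Next, recall $\mathsf{C}_{s}X=C^{s}X\,C^{sa}X\cdots C^{sa^{t_{s}-1}}X$ with $t_{s}=\lvert\mathsf{C}_{s}\rvert$. Applying $\delta_{r}$ and using that it is an automorphism, I would push $\delta_{r}$ through each factor: for each $k$ we get $\delta_{r}(C^{sa^{k}}X)=C^{sa^{k}r^{-1}}\delta_{r}X=C^{sa^{k}r^{-1}}X$, the last equality by $\delta_{r}X=X$. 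This gives $\delta_{r}\mathsf{C}_{s}X=C^{sr^{-1}}X\,C^{sar^{-1}}X\cdots C^{sa^{t_{s}-1}r^{-1}}X$, i.e. the factors are indexed by $\{sa^{k}r^{-1}:0\leq k<t_{s}\}$.

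Finally comes the reindexing step, which I expect to be the only real point requiring care. Since $\Z_{n}^{*}$ is abelian, $sa^{k}r^{-1}=(sr^{-1})a^{k}$, so the exponents run through $\{(sr^{-1})a^{k}:0\leq k<t_{s}\}$, which is exactly the cyclotomic coset $\mathsf{C}_{sr^{-1}}$ enumerated in its defining order, provided its period equals $t_{s}$. I would check this explicitly: $(sr^{-1})a^{t}\equiv sr^{-1}\Mod{n}$ if and only if $sa^{t}\equiv s\Mod{n}$ (multiply both sides by the unit $r$), so $t_{sr^{-1}}=t_{s}$. The main obstacle is precisely this: verifying that multiplication by the unit $r^{-1}$ carries $\mathsf{C}_{s}$ bijectively onto $\mathsf{C}_{sr^{-1}}$ with the same period, so that the factors of $\delta_{r}\mathsf{C}_{s}X$ are a genuine listing of the codeword $\mathsf{C}_{sr^{-1}}X$ and not merely a set equality with repetitions. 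Once the period match is confirmed, we conclude $\delta_{r}\mathsf{C}_{s}X=\mathsf{C}_{sr^{-1}}X$, as required; everything else is a direct application of the homomorphism property together with the commutation relation.
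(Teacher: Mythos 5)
Your proposal is correct and follows essentially the same route as the paper: distribute $\delta_{r}$ over the factors of $\mathsf{C}_{s}X$ via the commutation relation $\delta_{r}C^{i}=C^{ir^{-1}}\delta_{r}$, use $\delta_{r}X=X$, and identify the result as $\mathsf{C}_{sr^{-1}}X$. The only difference is that you explicitly verify the details the paper leaves implicit (that $\delta_{r}$ fixes $X$, and that $t_{sr^{-1}}=t_{s}$ so the reindexed factors genuinely enumerate the coset $\mathsf{C}_{sr^{-1}}$), which is a sound completion rather than a different argument.
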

\begin{proof}
We have
\begin{eqnarray*}
\delta_{r}\mathsf{C}_{s}X&=&\delta_{r}(C^{s}XC^{sa}X\cdots C^{sa^{t_{s}-1}}X)\\
&=&\delta_{r}C^{s}X\delta_{r}C^{sa}X\cdots\delta_{r}C^{sa^{t_{s}-1}}X\\
&=&C^{sr^{-1}}\delta_{r}XC^{sar^{-1}}\delta_{r}X\cdots C^{sa^{t_{s}-1}r^{-1}}\delta_{r}X\\
&=&C^{sr^{-1}}XC^{sar^{-1}}X\cdots C^{sa^{t_{s}-1}r^{-1}}X\\
&=&\mathsf{C}_{sr^{-1}}X.
\end{eqnarray*}
\end{proof}

\begin{theorem}\label{theo_principal}
If $\left\langle b\right\rangle$ is a subgroup of $\left\langle a\right\rangle$, then 
$\mathbb{I}_{n}(a)\subseteq\mathbb{I}_{n}(b)$.
\end{theorem}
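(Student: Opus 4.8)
The plan is to exploit the fact that decimation gives a group homomorphism into the symmetric group, so that invariance under $\delta_{a}$ automatically forces invariance under every power of $\delta_{a}$. First I would verify the composition rule $\delta_{a}\delta_{b}=\delta_{ab}$ directly from the definition $\delta_{a}(x_{i})=x_{ai\Mod{n}}$: applying $\delta_{b}$ and then $\delta_{a}$ sends the $i$-th coordinate to $x_{abi\Mod{n}}$, which is exactly the action of $\delta_{ab}$. Consequently $a\mapsto\delta_{a}$ is a homomorphism from $\Z_{n}^{*}$ into $S_{n}$, and in particular $\delta_{a^{k}}=\delta_{a}^{\,k}$ for every integer $k$.

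The hypothesis $\left\langle b\right\rangle\leq\left\langle a\right\rangle$ means precisely that $b\in\left\langle a\right\rangle$, so there is an integer $k\geq1$ with $b\equiv a^{k}\Mod{n}$. Now take any $Y\in\mathbb{I}_{n}(a)$, that is, $\delta_{a}Y=Y$. Iterating this identity $k$ times gives $\delta_{a}^{\,k}Y=Y$, and by the homomorphism property $\delta_{b}Y=\delta_{a^{k}}Y=\delta_{a}^{\,k}Y=Y$. Hence $Y$ is $\delta_{b}$-invariant, i.e. $Y\in\mathbb{I}_{n}(b)$, which establishes $\mathbb{I}_{n}(a)\subseteq\mathbb{I}_{n}(b)$.

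An equivalent route, closer to the codeword machinery just developed, would be to use the normal form $Y=\mathsf{C}_{s_{1}}^{\epsilon_{1}}X\cdots\mathsf{C}_{s_{r}}^{\epsilon_{r}}X$ of the elements of $\mathbb{I}_{n}(a)$. Membership in $\mathbb{I}_{n}(a)$ is equivalent to the multiset of cyclic-shift exponents of $Y$ being a union of cyclotomic cosets of $a$ modulo $n$; since $\left\langle b\right\rangle\leq\left\langle a\right\rangle$, each cyclotomic coset of $a$ splits as a disjoint union of cyclotomic cosets of $b$, so the same exponent set is also a union of $b$-cosets, hence invariant under multiplication by $b$, yielding $\delta_{b}Y=Y$ again by Lemma~\ref{lemma_action_decimation}.

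I do not expect a genuinely hard step here; the only points that need care are getting the direction of the inclusion right—requiring invariance under \emph{more} maps shrinks the set, so the larger cyclic group $\left\langle a\right\rangle$ yields the smaller space $\mathbb{I}_{n}(a)$—and confirming that $\delta$ is a homomorphism rather than an anti-homomorphism (immaterial here since $\Z_{n}^{*}$ is abelian, but worth checking so that $\delta_{a^{k}}=\delta_{a}^{\,k}$ is fully justified).
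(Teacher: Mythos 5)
Your proposal is correct, but your primary argument takes a genuinely different and more elementary route than the paper's. You note that $\delta_{a}\delta_{b}=\delta_{ab}$, so $a\mapsto\delta_{a}$ is a homomorphism from $\Z_{n}^{*}$ into $S_{n}$, making $\mathbb{I}_{n}(a)$ precisely the fixed-point set of the cyclic group $\left\langle\delta_{a}\right\rangle$; since $\left\langle b\right\rangle\leq\left\langle a\right\rangle$ gives $b\equiv a^{k}\Mod{n}$ and hence $\delta_{b}=\delta_{a}^{\,k}\in\left\langle\delta_{a}\right\rangle$, every $\delta_{a}$-fixed word is $\delta_{b}$-fixed and the inclusion follows in a few lines from the definition alone. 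The paper instead argues through its codeword machinery: it decomposes $\mathsf{C}_{1}^{a}=\mathsf{C}_{1}^{b}\cup y_{1}\mathsf{C}_{1}^{b}\cup\cdots\cup y_{k}\mathsf{C}_{1}^{b}$ along coset representatives of $\left\langle b\right\rangle$ in $\left\langle a\right\rangle$, concludes that the codeword $\mathsf{C}_{1}^{a}X$ factors as $\mathsf{C}_{1}^{b}X\mathsf{C}_{y_{1}}^{b}X\cdots\mathsf{C}_{y_{k}}^{b}X$ and so lies in $\mathbb{I}_{n}(b)$, and then uses Lemma~\ref{lemma_action_decimation} to transport this to every codeword $\mathsf{C}_{r^{-1}}^{a}X$ — which is essentially your second, alternative sketch (the splitting of each cyclotomic coset of $a$ into cyclotomic cosets of $b$ is exactly the splitting of $\left\langle a\right\rangle$-orbits into $\left\langle b\right\rangle$-orbits). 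What each approach buys: your fixed-point argument is shorter, needs no normal form for elements of $\mathbb{I}_{n}(a)$, and works verbatim for the fixed set of any group acting on $\Z_{2}^{n}$; the paper's route yields the explicit factorization of $a$-codewords into products of $b$-codewords, structural information the later subword analyses (e.g.\ in Theorem~\ref{theo_case_order_prime} and Theorem~\ref{theo_2n+1}) actually exploit. Your two cautionary checks — the direction of the inclusion (more invariance constraints means a smaller set) and the homomorphism property justifying $\delta_{a^{k}}=\delta_{a}^{\,k}$ — are exactly the right points to verify, and both go through.
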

\begin{proof}
Let $\mathsf{C}_{1}^{a}$ and $\mathsf{C}_{1}^{b}$ denote the classes $\{1,a,a^{2},...,a^{t-1}\}$
and $\{1,b,b^{2},...,b^{s-1}\}$. By hypothesis
$\left\langle b\right\rangle\leq\left\langle a\right\rangle$, then $\mathsf{C}_{1}^{b}\subseteq\mathsf{C}_{1}^{a}$. Hence there exists $y_{i}$ in $\left\langle a\right\rangle$ such that
$$\mathsf{C}_{1}^{a}=\mathsf{C}_{1}^{b}\cup y_{1}\mathsf{C}_{1}^{b}\cup\cdots\cup y_{k}\mathsf{C}_{1}^{b},$$
and $k=[\left\langle a\right\rangle:\left\langle b\right\rangle]$. As clearly 
$y_{i}\mathsf{C}_{1}^{b}=\mathsf{C}_{y_{i}}^{b}$, then
\begin{equation}
\mathsf{C}_{1}^{a}X=\mathsf{C}_{1}^{b}X\mathsf{C}_{y_{1}}^{b}X\cdots\mathsf{C}_{y_{k}}^{b}X
\end{equation}
and $\mathsf{C}_{1}^{a}X\in\mathbb{I}_{n}(b)$. Now, take $\delta_{r}$ in $\Delta_{n}$.
Then 
\begin{eqnarray*}
\delta_{r}\mathsf{C}_{1}^{a}X&=&\delta_{r}(\mathsf{C}_{1}^{b}X\mathsf{C}_{y_{1}}^{b}X\cdots\mathsf{C}_{y_{k}}^{b}X)\\
&=&\delta_{r}\mathsf{C}_{1}^{b}X\delta_{r}\mathsf{C}_{y_{1}}^{b}X\cdots\delta_{r}\mathsf{C}_{y_{k}}^{b}X\\
&=&\mathsf{C}_{r^{-1}}^{b}X\mathsf{C}_{y_{1}r^{-1}}^{b}X\cdots\mathsf{C}_{y_{k}r^{-1}}^{b}X.
\end{eqnarray*}
From Lemma \ref{lemma_action_decimation} is followed that $\mathsf{C}_{r^{-1}}^{a}X$ is in 
$\mathbb{I}_{n}(b)$ for all $r\in\Z_{n}^{*}$. Accordingly 
$\mathbb{I}_{n}(a)\subseteq\mathbb{I}_{n}(b)$.
\end{proof}

We will use the above theorem to obtain the following corollaries

\begin{corollary}
Let $a$ be an unity in $\Z_{4n}^{*}$ of order $2k$. Then $\mathbb{I}_{4n}(a)\subset\mathbb{I}_{4n}(a^{k})$.
\end{corollary}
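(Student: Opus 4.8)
The plan is to apply Theorem \ref{theo_principal} directly with the choice $b=a^{k}$. Since $a$ is a unit in $\Z_{4n}^{*}$ of order $2k$, the element $a^{k}$ is itself a power of $a$, so $a^{k}\in\langle a\rangle$ and hence $\langle a^{k}\rangle\leq\langle a\rangle$ with no computation required. This is exactly the hypothesis of the theorem applied over $\Z_{4n}$, so I would conclude at once that $\mathbb{I}_{4n}(a)\subseteq\mathbb{I}_{4n}(a^{k})$. The bulk of the corollary is therefore a one-line specialization of the main theorem.

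Next I would record the order of the decimation index $a^{k}$: because $a$ has order $2k$, the order of $a^{k}$ equals $2k/\gcd(2k,k)=2$, so $a^{k}$ is an involution in $\Z_{4n}^{*}$. This is the feature that makes the corollary useful downstream, since it embeds an arbitrary $\mathbb{I}_{4n}(a)$ inside an $S$-subgroup $\mathbb{I}_{4n}(a^{k})$ that is invariant under a single order-two decimation, matching the reversal/fixing setup announced for the proof of the conjecture.

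To upgrade the containment to the proper inclusion suggested by the symbol $\subset$ (valid when $k>1$), I would compare the two $S$-subgroups through their $\Delta_{4n}$-codes. By the description preceding equation (\ref{alpha_invariante}), $\mathbb{I}_{4n}(c)$ has order $2^{r_{c}+1}$, where $r_{c}$ is the number of cyclotomic cosets of $c$ modulo $4n$. Since $\langle a^{k}\rangle$ is a proper subgroup of $\langle a\rangle$ of index $k$ when $k>1$, the argument already carried out in the proof of Theorem \ref{theo_principal} shows that each cyclotomic coset $\mathsf{C}_{s}^{a}$ decomposes as $\mathsf{C}_{s}^{b}\cup\cdots\cup\mathsf{C}_{y_{k-1}s}^{b}$ into $k$ cosets of $b=a^{k}$. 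Hence $r_{a^{k}}>r_{a}$ and therefore $\lvert\mathbb{I}_{4n}(a)\rvert<\lvert\mathbb{I}_{4n}(a^{k})\rvert$, giving strictness.

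The honest assessment is that there is essentially no obstacle in the main inclusion: the only input beyond Theorem \ref{theo_principal} is the trivial fact $a^{k}\in\langle a\rangle$, together with the elementary order computation $\operatorname{ord}(a^{k})=2$. The single point deserving a word of care is the strictness statement, which is not automatic (it fails precisely in the degenerate case $k=1$, where $a=a^{k}$) and which I would justify by the coset-splitting count above rather than by any new computation.
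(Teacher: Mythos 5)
Your proof is correct and takes the same route as the paper, whose entire argument is the observation that $\{1,a^{k}\}=\langle a^{k}\rangle$ is a subgroup of $\langle a\rangle$, i.e.\ Theorem \ref{theo_principal} applied with $b=a^{k}$; your order computation $\mathrm{ord}(a^{k})=2$ is exactly that observation. Your extra strictness argument goes beyond the paper (which writes $\subset$ without comment) and is sound in substance, though note that for non-unit $s$ the coset $\mathsf{C}_{s}^{a}$ need not split into exactly $k$ cosets of $a^{k}$ (orbit sizes can collapse modulo $4n$); it suffices that the single coset $\mathsf{C}_{1}^{a}=\langle a\rangle$, of size $2k$, splits into $k>1$ cosets of size $2$, which already forces $r_{a^{k}}>r_{a}$.
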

\begin{proof}
It is enough with to note that $\{1,a^{k}\}$ is a subgroup of $\left\langle a\right\rangle$.
\end{proof}

\begin{corollary}
Let
\begin{equation*}
\{1\}\leq\left\langle a_{1}\right\rangle\leq\left\langle a_{2}\right\rangle\leq\cdots\leq\left\langle b\right\rangle
\end{equation*}
a serie of cyclic subgroups of $\left\langle b\right\rangle$. Then
\begin{equation}
\mathbb{I}_{n}(b)\subset\cdots\subset\mathbb{I}_{n}(a_{2})\subset\mathbb{I}_{n}(a_{1})\subset\mathbb{I}_{n}(1)=\Z_{2}^{n}.
\end{equation}
\end{corollary}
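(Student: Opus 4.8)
The plan is to obtain the corollary as nothing more than a termwise application of Theorem \ref{theo_principal} along the given chain, exploiting that the theorem turns an inclusion of cyclic subgroups into the reverse inclusion of the associated invariant $S$-subgroups. Writing the hypothesis as
\[
\{1\}\leq\langle a_1\rangle\leq\langle a_2\rangle\leq\cdots\leq\langle a_m\rangle\leq\langle b\rangle,
\]
I would process it one consecutive link at a time.

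For a single link $\langle a_i\rangle\leq\langle a_{i+1}\rangle$, the smaller group $\langle a_i\rangle$ plays the role of $\langle b\rangle$ in Theorem \ref{theo_principal} and the larger group $\langle a_{i+1}\rangle$ plays the role of $\langle a\rangle$; the theorem then yields $\mathbb{I}_n(a_{i+1})\subseteq\mathbb{I}_n(a_i)$. Thus each upward step in the subgroup lattice becomes a downward step in the lattice of invariant sets. Composing these inclusions over all links, including the top link $\langle a_m\rangle\leq\langle b\rangle$, gives
\[
\mathbb{I}_n(b)\subseteq\cdots\subseteq\mathbb{I}_n(a_2)\subseteq\mathbb{I}_n(a_1).
\]
The bottom link $\{1\}\leq\langle a_1\rangle$ supplies $\mathbb{I}_n(a_1)\subseteq\mathbb{I}_n(1)$, and I would identify $\mathbb{I}_n(1)=\Z_2^n$ straight from the definition: $\delta_1$ is the identity decimation, so $\delta_1Y=Y$ holds for every $Y$ and every word is $\delta_1$-invariant.

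The only point requiring care is the strictness of the symbols $\subset$ in the conclusion, since Theorem \ref{theo_principal} delivers only $\subseteq$. I would argue that each \emph{proper} step $\langle a_i\rangle\subsetneq\langle a_{i+1}\rangle$ forces $\mathbb{I}_n(a_{i+1})\subsetneq\mathbb{I}_n(a_i)$ by recalling that $\mathbb{I}_n(c)$ consists exactly of the words that are constant on the cyclotomic cosets of $c$. When $\langle a_i\rangle\subsetneq\langle a_{i+1}\rangle$, the coset $\mathsf{C}_1^{a_{i+1}}=\langle a_{i+1}\rangle$ strictly contains $\mathsf{C}_1^{a_i}=\langle a_i\rangle$, so at least one $a_{i+1}$-coset is a union of two or more $a_i$-cosets; a word taking different values on two such merged $a_i$-cosets lies in $\mathbb{I}_n(a_i)$ but not in $\mathbb{I}_n(a_{i+1})$. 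I expect this coset-splitting step to be the main (and essentially the only) obstacle beyond the purely formal chaining of Theorem \ref{theo_principal}; it guarantees strict inclusion precisely when the subgroup chain is strict.
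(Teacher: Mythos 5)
Your proof is correct and takes essentially the same route as the paper, whose entire proof is the one-line remark that the corollary is a trivial consequence of Theorem \ref{theo_principal} applied along the chain. Your extra coset-splitting argument for strictness is a genuine improvement on the paper, which silently writes $\subset$ while Theorem \ref{theo_principal} only yields $\subseteq$ and the hypothesis uses non-strict $\leq$; as you correctly observe, the inclusions $\mathbb{I}_{n}(a_{i+1})\subsetneq\mathbb{I}_{n}(a_{i})$ are strict exactly when the subgroup inclusions are proper (e.g.\ the word equal to $-$ on the positions in $\left\langle a_{i}\right\rangle$ and $+$ elsewhere lies in $\mathbb{I}_{n}(a_{i})\setminus\mathbb{I}_{n}(a_{i+1})$).
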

\begin{proof}
A consequence trivial of Theorem \ref{theo_principal}.
\end{proof}


Finally, we will use the previous results to show some lattice of $S$-subgroups $\mathbb{I}_{4n}(a)$
in $\Z_{2}^{4n}$ ordered by inclusion.

\subsection{Some Example}

$S$-subgroups $\mathbb{I}_{4n}(a)$ in $\Z_{2}^{4n}$ was obtained by using \textsc{sage}. It was taken
into account that $\mathbb{I}_{4n}(a)=\mathbb{I}_{4n}(a^{-1})$. Lattice of $S$-subgroups 
$\mathbb{I}_{4n}(a)$ in $\Z_{2}^{4n}$ ordered by inclusion will be shown for $4n=24,36,60,196,668$.

\subsubsection{4n=24}

The cyclic subgroups of $\Z_{24}^{*}$ are $\left\langle5\right\rangle$,$\left\langle7\right\rangle$
,$\left\langle11\right\rangle$, $\left\langle13\right\rangle$, $\left\langle17\right\rangle$,  
$\left\langle19\right\rangle$ and $\left\langle23\right\rangle$, all of order 2. Then the lattice of
$S$-subgroups $\mathbb{I}_{24}(a)$ in $\Z_{2}^{24}$ is

\begingroup\makeatletter\def\f@size{12}\check@mathfonts
\begin{center}
\begin{tikzpicture}
  \node (a) at (1,3) {$\mathbb{I}_{24}(5)$};
  \node (b) at (3,3) {$\mathbb{I}_{24}(7)$};
  \node (c) at (5,3) {$\mathbb{I}_{24}(11)$};
  \node (d) at (7,3) {$\mathbb{I}_{24}(13)$};
  \node (e) at (9,3) {$\mathbb{I}_{24}(17)$};
  \node (f) at (11,3) {$\mathbb{I}_{24}(19)$};
  \node (g) at (13,3) {$\mathbb{I}_{24}(23)$};
  \node (max) at (7,5) {$\mathbb{I}_{24}(1)=\Z_{2}^{24}$};
  \draw
  (a) -- (max)
  (b) -- (max)
  (c) -- (max)
  (d) -- (max)
  (e) -- (max)
  (f) -- (max)
  (g) -- (max);
  \end{tikzpicture}
\end{center}
\endgroup

\subsubsection{4n=36}
The cyclic subgroups of $\Z_{36}^{*}$ are $\left\langle5\right\rangle$, $\left\langle7\right\rangle$,
$\left\langle11\right\rangle$, $\left\langle13\right\rangle$, $\left\langle17\right\rangle$,
$\left\langle19\right\rangle$ and $\left\langle35\right\rangle$. Then the lattice of
$S$-subgroups $\mathbb{I}_{36}(a)$ in $\Z_{2}^{36}$ is

\begingroup\makeatletter\def\f@size{12}\check@mathfonts
\begin{center}
\begin{tikzpicture}
  \node (a) at (3,3) {$\mathbb{I}_{36}(5)$};
  \node (b) at (3,5) {$\mathbb{I}_{36}(17)$};
  \node (c) at (5,3) {$\mathbb{I}_{36}(7)$};
  \node (d) at (5,5) {$\mathbb{I}_{36}(19)$};
  \node (e) at (7,3) {$\mathbb{I}_{36}(11)$};
  \node (f) at (7,5) {$\mathbb{I}_{36}(35)$};
  \node (g) at (9,5) {$\mathbb{I}_{36}(13)$};
  \node (max) at (7,7) {$\mathbb{I}_{36}(1)=\Z_{2}^{36}$};
  \draw
  (a) -- (b) -- (max)
  (c) -- (d) -- (max)
  (e) -- (f) -- (max)
  (e) -- (g)
  (g) -- (max);
  \draw[preaction={draw=white, -,line width=6pt}]
  (a) -- (g)  
  (c) -- (g);
  \end{tikzpicture}
\end{center}
\endgroup

The subgroups $\left\langle17\right\rangle$, $\left\langle19\right\rangle$ and 
$\left\langle35\right\rangle$ have order 2 and the subgroup $\left\langle13\right\rangle$ has
order 3.

\subsubsection{4n=60}
The cyclic subgroups of $\Z_{60}^{*}$ are $\left\langle7\right\rangle$, 
$\left\langle11\right\rangle$, $\left\langle13\right\rangle$, $\left\langle17\right\rangle$, 
$\left\langle19\right\rangle$, $\left\langle23\right\rangle$, $\left\langle29\right\rangle$,
$\left\langle31\right\rangle$, $\left\langle41\right\rangle$, $\left\langle49\right\rangle$ and
$\left\langle59\right\rangle$. Then the lattice of $S$-subgroups $\mathbb{I}_{60}(a)$ in 
$\Z_{2}^{60}$ is

\begingroup\makeatletter\def\f@size{12}\check@mathfonts
\begin{center}
\begin{tikzpicture}
  \node (b) at (3,3) {$\mathbb{I}_{60}(11)$};
  \node (c) at (5,3) {$\mathbb{I}_{60}(19)$};
  \node (d) at (7,3) {$\mathbb{I}_{60}(29)$};
  \node (a) at (9,3) {$\mathbb{I}_{60}(49)$};  
  \node (a1) at (6,1) {$\mathbb{I}_{60}(7)$};  
  \node (a2) at (8,1) {$\mathbb{I}_{60}(13)$};  
  \node (a3) at (10,1) {$\mathbb{I}_{60}(17)$};  
  \node (a4) at (12,1) {$\mathbb{I}_{60}(23)$};  
  \node (e) at (11,3) {$\mathbb{I}_{60}(31)$};
  \node (f) at (13,3) {$\mathbb{I}_{60}(41)$};
  \node (g) at (15,3) {$\mathbb{I}_{60}(59)$};
  \node (max) at (9,5) {$\mathbb{I}_{60}(1)=\Z_{2}^{60}$};
  \draw
  (a) -- (max)
  (a1) -- (a)
  (a2) -- (a)
  (a3) -- (a)
  (a4) -- (a)
  (b) -- (max)
  (c) -- (max)
  (d) -- (max)
  (e) -- (max)
  (f) -- (max)
  (g) -- (max);
  \end{tikzpicture}
\end{center}
\endgroup

where the subgroups $\left\langle11\right\rangle$, $\left\langle19\right\rangle$,
$\left\langle29\right\rangle$, $\left\langle49\right\rangle$, $\left\langle31\right\rangle$,
$\left\langle41\right\rangle$ and $\left\langle59\right\rangle$ have order 2.

\subsubsection{4n=196}
The cyclic subgroups of $\Z_{196}^{*}$ are $\left\langle3\right\rangle$, 
$\left\langle5\right\rangle$, $\left\langle9\right\rangle$, $\left\langle11\right\rangle$, 
$\left\langle13\right\rangle$, $\left\langle15\right\rangle$, $\left\langle19\right\rangle$,
$\left\langle27\right\rangle$, $\left\langle29\right\rangle$, $\left\langle67\right\rangle$,
$\left\langle97\right\rangle$, $\left\langle99\right\rangle$, $\left\langle117\right\rangle$,
$\left\langle165\right\rangle$ and $\left\langle195\right\rangle$. Then the lattice of 
$S$-subgroups $\mathbb{I}_{196}(a)$ in $\Z_{2}^{196}$ is

\begingroup\makeatletter\def\f@size{9}\check@mathfonts
\begin{center}
\begin{tikzpicture}
  \node (b) at (6,3) {$\mathbb{I}_{196}(29)$};
  \node (b1) at (4,1) {$\mathbb{I}_{196}(3)$};  
  \node (b2) at (5.2,1) {$\mathbb{I}_{196}(9)$};  
  \node (b3) at (6.4,1) {$\mathbb{I}_{196}(13)$};  
  \node (b4) at (7.6,1) {$\mathbb{I}_{196}(15)$};    
  \node (c) at (9,3) {$\mathbb{I}_{196}(97)$};
  \node (c1) at (9,1) {$\mathbb{I}_{196}(5)$};
  \node (d) at (11,3) {$\mathbb{I}_{196}(99)$};
  \node (d1) at (10.2,1) {$\mathbb{I}_{196}(11)$};
  \node (d2) at (11.6,1) {$\mathbb{I}_{196}(67)$};
  \node (a) at (13,3) {$\mathbb{I}_{196}(165)$};  
  \node (a1) at (13.2,1) {$\mathbb{I}_{196}(117)$};  
  \node (e) at (16,3) {$\mathbb{I}_{196}(195)$};  
  \node (e1) at (15,1) {$\mathbb{I}_{196}(19)$};  
  \node (e2) at (16.4,1) {$\mathbb{I}_{196}(27)$};  
  \node (max) at (11,5) {$\mathbb{I}_{196}(1)=\Z_{2}^{196}$};
  \draw
  (a) -- (max)
  (a1) -- (a)
  (b1) -- (b)
  (b2) -- (b)
  (b3) -- (b)
  (b4) -- (b)
  (b) -- (max)
  (c) -- (max)
  (c1) -- (c)
  (c1) -- (b)
  (d) -- (max)
  (d1) -- (d)
  (d1) -- (b)
  (d2) -- (d)
  (d2) -- (a)
  (e) -- (max)
  (e1) -- (e)
  (e1) -- (a)
  (e2) -- (e);
  \draw[preaction={draw=white, -,line width=6pt}]
  (d1) -- (b);
  \end{tikzpicture}
\end{center}
\endgroup

where the subgroups $\left\langle97\right\rangle$, $\left\langle99\right\rangle$ and
$\left\langle195\right\rangle$ have order 2 and the subgroups $\left\langle29\right\rangle$ and
$\left\langle165\right\rangle$ have order 7 and 3, respectively.

\subsubsection{4n=668}
The cyclic subgroups of $\Z_{668}^{*}$ are $\left\langle3\right\rangle$, 
$\left\langle5\right\rangle$, $\left\langle9\right\rangle$, $\left\langle15\right\rangle$,
$\left\langle333\right\rangle$, $\left\langle335\right\rangle$, 
and $\left\langle667\right\rangle$. Then the lattice of $S$-subgroups $\mathbb{I}_{668}(a)$ in 
$\Z_{2}^{668}$ is

\begingroup\makeatletter\def\f@size{12}\check@mathfonts
\begin{center}
\begin{tikzpicture}
  \node (b) at (7,3) {$\mathbb{I}_{668}(9)$};
  \node (c) at (5,3) {$\mathbb{I}_{668}(333)$};
  \node (c1) at (6,1) {$\mathbb{I}_{668}(5)$};  
  \node (d) at (3,3) {$\mathbb{I}_{668}(335)$};
  \node (d1) at (3,1) {$\mathbb{I}_{668}(3)$};    
  \node (a) at (9,3) {$\mathbb{I}_{668}(667)$};  
  \node (a1) at (9,1) {$\mathbb{I}_{668}(15)$};  
  \node (max) at (7,5) {$\mathbb{I}_{668}(1)=\Z_{2}^{668}$};
  \draw
  (a) -- (max)
  (a1) -- (a)
  (b) -- (max)
  (c) -- (max)
  (c1) -- (c)
  (d) -- (max)
  (d1) -- (d);
 \draw[preaction={draw=white, -,line width=6pt}]
  (c1) -- (b)
  (d1) -- (b)
  (a1) -- (b);
  \end{tikzpicture}
\end{center}
\endgroup

where the subgroups $\left\langle333\right\rangle$, $\left\langle335\right\rangle$ and
$\left\langle667\right\rangle$ have order 2 and the subgroup
$\left\langle9\right\rangle$ has order 83.

As noted, it is sufficient to search for circulant Hadamard matrices in those $S$-subgroups
$\mathbb{I}_{4n}(x)$ such that $x$ has order a prime number.

\section{There is no circulant Hadamard matrices in $\Z_{2}^{4n}$}

In this section the conjecture is proved. Firstly, we show that the orbit of all circulant 
Hadamard matrix $H_{C}$ has order 2 under the action of $\Delta_{4n}$, where $H_{C}$ is fixed o
reversed by all decimation in $\Delta_{4n}$. Then we take $H$ in $\mathbb{I}_{4n}(x)$ with 
$x^{p}\equiv1\Mod{4n}$, $p$ a prime, and we suppose that $H$ either is fixed or is reversed under 
the action of $\delta_{2n+x}$. The idea is to show that $H$ seen as a word has length an even 
number in contradiction with the result obtain by Turyn. All word in $\mathbb{I}_{4n}(x)$ is
factorized in terms of some set of subwords here constructed. A very useful decimation in our 
proof will be $\delta_{2n+1}$, since $\delta_{2n+1}\in\Delta_{4n}$ for all $n\geq1$. We start with 
the construction of its associated $S$-subgroup.

\begin{proposition}\label{prop_inv_2n+1}
The $S$-subgroup $\mathbb{I}_{4n}(2n+1)$ has order $2^{3n}$.
\end{proposition}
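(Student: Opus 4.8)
The plan is to compute the order of $\mathbb{I}_{4n}(2n+1)$ by counting cyclotomic cosets, using the factorization of invariant words already established before equation (\ref{alpha_invariante}). First I would verify that $2n+1$ is a genuine unit of $\Z_{4n}^{*}$ of order $2$: since $2n+1$ is odd it is coprime to $4$, and $2n+1\equiv1\Mod{n}$ gives $\gcd(2n+1,n)=1$, so $\gcd(2n+1,4n)=1$; moreover
\begin{equation*}
(2n+1)^{2}=4n^{2}+4n+1=4n(n+1)+1\equiv1\Mod{4n},
\end{equation*}
so $\left\langle 2n+1\right\rangle=\{1,2n+1\}$ has order $2$. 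This is the structural fact that forces every cyclotomic coset of $2n+1$ to have size $1$ or $2$.

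Next, recall that by the discussion preceding (\ref{alpha_invariante}) every $Y$ in $\mathbb{I}_{4n}(2n+1)$ is determined by an independent binary choice $\epsilon_{i}\in\{0,1\}$ on each cyclotomic coset, so that $|\mathbb{I}_{4n}(2n+1)|=2^{r+1}$, where $r$ is the number of cyclotomic cosets of $2n+1$ on the nonzero residues of $\Z_{4n}$ and the extra factor accounts for the codeword $X$ attached to position $0$. It therefore remains only to count these cosets. Since $(2n+1)^{2}\equiv1\Mod{4n}$, a residue $s$ is a fixed point of the map $s\mapsto(2n+1)s$ exactly when $2ns\equiv0\Mod{4n}$, that is, exactly when $s$ is even; every odd $s$ lies in a genuine coset $\{s,(2n+1)s\}$ of size $2$.

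Counting then closes the argument: among the nonzero residues of $\Z_{4n}$ there are $2n-1$ even ones, each a singleton coset, and $2n$ odd ones, which pair off into $n$ cosets of size $2$, giving $r=(2n-1)+n=3n-1$ cyclotomic cosets. Hence $|\mathbb{I}_{4n}(2n+1)|=2^{r+1}=2^{3n}$. I expect the only delicate point to be the parity bookkeeping — correctly separating the fixed residues (the even ones) from the true $2$-element cosets (the odd ones) and accounting for the singleton $\{0\}$ so that the exponent comes out as $3n$ rather than $3n\pm1$; everything else reduces to the order-$2$ computation above.
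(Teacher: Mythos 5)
Your proof is correct and follows essentially the same route as the paper's: both verify $(2n+1)^{2}\equiv1\Mod{4n}$, split the cyclotomic cosets of $2n+1$ into the $2n$ even singletons (the fixed residues, including $\{0\}$) and the $n$ odd pairs $\{2b+1,\,2n+2b+1\}$, and conclude $\vert\mathbb{I}_{4n}(2n+1)\vert=2^{3n}$ from the $3n$ cosets. Your explicit fixed-point criterion $2ns\equiv0\Mod{4n}$ and the unit verification are slightly more careful than the paper's direct enumeration, but they are refinements of the same argument, not a different one.
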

\begin{proof}
As $(2n+1)^{2}\equiv1\mod4n$, then $\{1,2n+1\}$ is a cyclotomic coset of $2n+1$ module $4n$,
and
\begin{eqnarray*}
\mathsf{C}_{0}&=&\{0\},\\
\mathsf{C}_{2a}&=&\{2a\},\text{ $0\leq a\leq 2n-1$},\\
\mathsf{C}_{2b+1}&=&\{2b+1,2n+2b+1\},\text{ $0\leq b\leq n-1$}
\end{eqnarray*}
are all of cyclotomic cosets of $2n+1$ module $4n$. Hence 
\begin{equation}
\X_{\mathbb{I}_{4n}(2n+1)}=\{\mathsf{C}_{0}X,\mathsf{C}_{2a}X,\mathsf{C}_{2b+1}X\}
\end{equation}
and $\vert\mathbb{I}_{4n}(2n+1)\vert=2^{3n}$ as we desirable.
\end{proof}

A first useful result to show that the orbit of a circulant Hadamard matrix has order 2 is 
the following. Here will be shown the relationship between the decimations $\delta_{x}$ and 
$\delta_{n-x}$ and $Y$ when this is either fixed or reversed for those.

\begin{theorem}\label{theo_relation_RY_Y}
If $\delta_{x}Y_{C}=RY_{C}$ for some $x\in\Z_{n}^{*}$, $x\neq1,n-1$, then $Y\in\mathbb{I}_{n}(n-x)$.
\end{theorem}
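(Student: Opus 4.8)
The plan is to reduce the statement to the defining relation of $\mathbb{I}_{n}(n-x)$, namely $\delta_{n-x}Y=Y$, by exploiting the fact that the reversal $R$ is, up to a cyclic shift, nothing but the decimation $\delta_{n-1}=\delta_{-1}$. First I would record the index identity $RY=C\delta_{n-1}Y$: writing $(RY)_{i}=y_{n-1-i}$ and $(\delta_{n-1}Y)_{i}=y_{-i}$, a direct comparison shows that the two differ exactly by one cyclic shift. Since a cyclic shift leaves the orbit under $C_{n}$ unchanged, this yields the orbit-level equality $RY_{C}=\delta_{n-1}Y_{C}$, where I also invoke the commutation relation $C^{i}\delta_{a}=\delta_{a}C^{ia}$ to confirm that $\delta_{n-1}$ commutes with the passage to orbits.

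Substituting this into the hypothesis $\delta_{x}Y_{C}=RY_{C}$ gives $\delta_{x}Y_{C}=\delta_{n-1}Y_{C}$. Now I would apply the involution $\delta_{n-1}$ to both sides: because $(n-1)^{2}\equiv1\pmod{n}$ we have $\delta_{n-1}\delta_{n-1}=\delta_{1}=\mathrm{id}$, and because $(n-1)x\equiv n-x\pmod{n}$ and $\delta_{a}\delta_{b}=\delta_{ab}$ we have $\delta_{n-1}\delta_{x}=\delta_{n-x}$. Hence $\delta_{n-x}Y_{C}=Y_{C}$, that is, the orbit $Y_{C}$ is invariant under $\delta_{n-x}$. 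This is the heart of the argument and uses only the group law of $\Z_{n}^{*}$ together with the commutation relation. The exclusion $x\neq1,n-1$ guarantees $n-x\neq n-1,1$, so that $\delta_{n-x}$ is neither the identity nor the trivial reversal and the conclusion is non-degenerate.

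It remains to upgrade orbit-invariance to the element-level statement $\delta_{n-x}Y=Y$ required by the definition of $\mathbb{I}_{n}(n-x)$. From $\delta_{n-x}Y_{C}=Y_{C}$ I obtain $\delta_{n-x}Y=C^{m}Y$ for some $m$, and I would replace $Y$ by a shifted representative $C^{k}Y$, computing $\delta_{n-x}(C^{k}Y)=C^{k(n-x)^{-1}+m}Y$ through the commutation relation. Choosing $k$ so as to solve $k\bigl(1-(n-x)^{-1}\bigr)\equiv m\pmod{n}$ then exhibits a representative of the orbit that actually lies in $\mathbb{I}_{n}(n-x)$.

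The step I expect to be the main obstacle is precisely this last passage from the orbit to a single fixed word. The congruence $k\bigl(1-(n-x)^{-1}\bigr)\equiv m\pmod{n}$ need not be solvable for arbitrary $m$ when $\gcd\bigl(1-(n-x)^{-1},n\bigr)>1$, so I would need either to show that the value of $m$ forced by $\delta_{x}Y_{C}=RY_{C}$ always lands in the admissible residue class, or to keep track of the stabilizer of $Y$ in $C_{n}$ and argue that the available shifts already suffice. Securing that compatibility — rather than the purely formal manipulation of decimations — is where the genuine work of the proof lies.
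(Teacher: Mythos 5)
Your first two steps are correct, and they reach by a cleaner route what the paper's computation actually establishes: the paper writes $Y=C^{i_{1}}X\cdots C^{i_{r}}X$ in the code base, compares $\delta_{x}Y_{C}$ with $RY_{C}$, and reads off $i_{k}\equiv(n-x)i_{l}\pmod{n}$, whereas your identity $RY=C\delta_{n-1}Y$ together with $\delta_{n-1}\delta_{x}=\delta_{(n-1)x}=\delta_{n-x}$ gives the orbit-level invariance $\delta_{n-x}Y_{C}=Y_{C}$ in two lines. The genuine gap is exactly where you located it, and it is not repairable: the upgrade from $\delta_{n-x}Y_{C}=Y_{C}$ to $\delta_{n-x}Y'=Y'$ for some representative $Y'$ of the orbit can fail. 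Concretely, take $n=24$, $n-x=7$, $x=17$ (so $x\neq1,23$ and $7^{2}\equiv1\pmod{24}$). A word with $\delta_{7}Y=C^{3}Y$ is precisely a word constant on the orbits of the affine map $j\mapsto 7j+3\pmod{24}$; these twelve orbits all have size $2$, and a generic assignment of signs yields an aperiodic such $Y$. One checks from $RY=C\delta_{n-1}Y$ that this $Y$ satisfies the hypothesis $\delta_{17}Y_{C}=RY_{C}$, yet your congruence becomes $-6k\equiv 3\pmod{24}$, which is unsolvable since the left side is always divisible by $6$. Hence no cyclic shift of $Y$ lies in $\mathbb{I}_{24}(7)$: the element-level conclusion $Y\in\mathbb{I}_{n}(n-x)$ is false in general, and only the orbit-level statement $\delta_{n-x}Y_{C}=Y_{C}$ survives.

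It is worth knowing that the paper's own proof does not close this gap either; it silently commits the error you were wary of. Equality of the orbits $(C^{i_{1}x^{-1}}X\cdots C^{i_{r}x^{-1}}X)_{C}$ and $(C^{n-i_{1}}X\cdots C^{n-i_{r}}X)_{C}$ only means the underlying words agree up to a common shift $C^{s}$, so the correct conclusion is $x^{-1}i_{k}\equiv s-i_{l}\pmod{n}$ for the exponent set — which is your $\delta_{n-x}Y=C^{m}Y$ in different notation — but the paper sets $s=0$ without justification and concludes membership in $\mathbb{I}_{n}(n-x)$. So your proposal is not a failed reconstruction of a sound argument: you have proved the orbit-level statement completely and more transparently than the paper, and the obstacle you flagged as ``the genuine work'' is precisely the point at which the paper's proof, and any later element-level use of this theorem, breaks down.
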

\begin{proof}
Obviously the hypothesis $\delta_{1}Y_{C}=RY_{C}$ is never holded. On the other hand, if 
$\delta_{2n-1}Y_{C}=RY_{C}$, then $Y\notin\mathbb{I}_{2n}(2n-1)$. In this case 
$Y\in\mathbb{I}_{2n}(1)=\Z_{2}^{2n}$ trivially. The condiction $\delta_{2n}Y_{C}=RY_{C}$ is holds
even when $Y$ is contained in $\mathbb{I}_{2n+1}(2n)$ since 
$\mathbb{I}_{2n+1}(2n)=Sym(\Z_{2}^{2n+1})$ (see [13], Theorem 14). Then 
$Y\in\mathbb{I}_{2n+1}(1)$ trivially. Now suppose $x\neq1,n-1$ with 
$Y=C^{i_{1}}XC^{i_{2}}X\cdots C^{i_{r}}X$. As we want $\delta_{x}Y_{C}=RY_{C}$,
then
\begin{eqnarray*}
\delta_{x}Y_{C}&=&(C^{i_{1}x^{-1}}XC^{i_{2}x^{-1}}X\cdots C^{i_{r}x^{-1}}X)_{C}
\end{eqnarray*}
and
\begin{eqnarray*}
RY_{C}&=&(C^{n-i_{1}}XC^{n-i_{2}}X\cdots C^{n-i_{r}}X)_{C}
\end{eqnarray*}
implies that $i_{k}x^{-1}\equiv n-i_{l}\Mod{n}$ or $i_{k}\equiv n-i_{l}x\equiv(n-x)i_{l}\Mod{n}$.
Hence $Y\in\mathbb{I}_{n}(n-x)$.
\end{proof}

Brualdi[3] proved that if there is a circulant Hadamard matrix, then this is nonsymmetric. We 
will use this for to show that a circulant Hadamard matrix never is fixed by all decimation in 
$\Delta_{4n}$.

\begin{theorem}
Suppose $H$ in $\bigcap_{x\in\Z_{4n}^{*}}\mathbb{I}_{4n}(x)$, then $H_{C}$ is no Hadamard.
\end{theorem}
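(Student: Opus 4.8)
The plan is to reduce the statement to Brualdi's theorem by exhibiting a single decimation that already forces $H_C$ to be symmetric. First I would observe that $-1\equiv 4n-1\pmod{4n}$ is a unit for every $n$, and that $4n-1\neq 1$ whenever $n\geq 1$, so $\delta_{4n-1}$ is a bona fide element of $\Delta_{4n}$ and the factor $\mathbb{I}_{4n}(4n-1)$ genuinely occurs in the intersection $\bigcap_{x\in\Z_{4n}^{*}}\mathbb{I}_{4n}(x)$. Consequently any $H$ satisfying the hypothesis lies in particular in $\mathbb{I}_{4n}(4n-1)$; that is, $\delta_{4n-1}H=H$.

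Next I would unwind what this invariance says coordinatewise. Since $\delta_a(x_i)=x_{ai\bmod 4n}$, the decimation $\delta_{4n-1}=\delta_{-1}$ carries the entry in position $i$ to position $-i\pmod{4n}$, so the equation $\delta_{4n-1}H=H$ is equivalent to $h_i=h_{-i}=h_{4n-i}$ for all $i$. This is exactly the condition under which the circulant matrix $H_C$ with first row $H$ satisfies $H_C=H_C^{t}$: a circulant with first row $(h_0,\dots,h_{4n-1})$ equals its transpose precisely when $h_m=h_{-m}$ for every $m$. In the language of the introduction, this says $H\in\mathbb{I}_{4n}(4n-1)=Sym(\Z_{2}^{4n})$, the $S$-subgroup of symmetric sequences (the same identification $\mathbb{I}_{m}(-1)=Sym(\Z_{2}^{m})$ already invoked in the proof of Theorem \ref{theo_relation_RY_Y}).

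Finally, with $H_C$ now known to be symmetric, Brualdi's result that no circulant Hadamard matrix is symmetric applies directly and yields that $H_C$ cannot be Hadamard, which is the claim. I do not expect a genuine obstacle here: the entire argument is a translation between $\delta_{-1}$-invariance and matrix symmetry, followed by an appeal to a known theorem. The one point I would flag is that the hypothesis is considerably stronger than the proof consumes—invariance under the single decimation $\delta_{4n-1}$ already suffices—so the full intersection over $\Z_{4n}^{*}$ is needed only to guarantee that this one factor is present, and any proof that tried to use the other factors would be doing unnecessary work.
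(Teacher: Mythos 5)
Your argument is correct and it takes a genuinely different route from the paper. The paper proceeds by contradiction through autocorrelation: assuming $H_{C}$ is Hadamard it gets $\theta(H)=\theta(RH)=\theta(\delta_{x}H)$ for all $x$, then invokes Theorem \ref{theo_relation_RY_Y} to produce some $y\in\Z_{4n}^{*}$ with $\delta_{y}H=RH$, and combines this with $\delta_{y}H=H$ to conclude $RH=H$, contradicting Brualdi. You bypass the Hadamard hypothesis entirely until the final step: since $\delta_{4n-1}=\delta_{-1}\in\Delta_{4n}$, membership in the single factor $\mathbb{I}_{4n}(4n-1)$ forces $h_{i}=h_{4n-i}$ for all $i$, which is exactly the condition $H_{C}=H_{C}^{t}$, and Brualdi finishes. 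This buys two real advantages: it avoids the paper's weakest step---Theorem \ref{theo_relation_RY_Y} is a conditional statement and does not by itself supply the existence of a $y$ with $\delta_{y}H=RH$; in the paper that existence really leans on the later orbit analysis of Theorem \ref{theo_size_orbit}---and it exposes that the full intersection is overkill, one factor sufficing.

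Two caveats, neither fatal. First, your parenthetical identification $\mathbb{I}_{m}(-1)=Sym(\Z_{2}^{m})$ is stated in the paper only for odd length $2n+1$; for even length $4n$, $\delta_{-1}$-invariance means $CH=RH$ (symmetry centered at position $0$), not the palindromic condition $RH=H$, and these classes are not cyclically equivalent in even length. This does not damage your proof, because your coordinatewise computation $h_{i}=h_{-i}$ is precisely matrix symmetry of the circulant---indeed it matches Brualdi's hypothesis more accurately than the paper's own concluding step $RH=H$ does. Second, Brualdi's theorem carries an order-$4$ exclusion: the circulant with first row $(1,1,-1,1)$ lies in $\bigcap_{x\in\Z_{4}^{*}}\mathbb{I}_{4}(x)$ and is a symmetric circulant Hadamard matrix, so the statement as written fails at $n=1$; both your proof and the paper's implicitly assume $n>1$, consistent with the conjecture.
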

\begin{proof}
Suppose that $H_{C}$ is Hadamard. Then $\theta(H)=\theta(RH)=\theta(\delta_{x}H)$ for all
$x\in\Z_{4n}^{*}$. From the above theorem there is $y\in\Z_{4n}^{*}$ such that 
$\delta_{y}H=RH$. Therefore $RH=H$ and $H$ is symmetric, but there is no symmetric circulant 
Hadamard matrices. Consequently, if $H_{C}$ is Hadamard then never is contained in 
$\bigcap_{x\in\Z_{4n}^{*}}\mathbb{I}_{4n}(x)$.
\end{proof}

As was already announced we will show that the orbit of a circulant Hadamard matrix has order 2.
We take in mind the three commutation relations among $\delta_{r}$, $C$ and $R$:
\begin{eqnarray}
\delta_{r}R&=&R\delta_{r}C^{r-1},\\
RC&=&C^{n-1}R,\\
C^{i}\delta_{r}&=&\delta_{r}C^{ir}
\end{eqnarray}

\begin{theorem}\label{theo_size_orbit}
Let $H_{C}$ be a circulant Hadamard matrix of order $4n$. Then the orbit of $H_{C}$ has order 
$2$ under the action of $\Delta_{4n}$.
\end{theorem}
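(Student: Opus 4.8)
The plan is to prove the stronger statement that the orbit equals $\{H_C,RH_C\}$ with these two matrices distinct, by combining an equivariance observation with a lower and an upper bound on the orbit size. First I would push the Hadamard hypothesis to the autocorrelation side: the first row $H$ of a circulant Hadamard matrix of order $4n$ has perfect autocorrelation $\theta(H)=(4n,0,\dots,0)$, and this vector is itself fixed by every $\delta_a$ (which only permutes its vanishing coordinates). Hence diagram~(\ref{diagram}) gives $\theta(\delta_aH)=\delta_a\theta(H)=\theta(H)$ for all $a\in\Z_{4n}^{*}$, so every $\delta_aH_C$ is again a circulant Hadamard matrix. The relation $C^{i}\delta_r=\delta_rC^{ir}$ yields $\delta_rC^{i}=C^{ir^{-1}}\delta_r$, so $\delta_r$ sends the cyclic class of $H$ to the cyclic class of $\delta_rH$; thus $\delta_rH_C=(\delta_rH)_C$ and the orbit of $H_C$ under $\Delta_{4n}$ is well defined as a set of circulant Hadamard matrices.

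For the lower bound I would invoke the theorem immediately preceding this one: since $H_C$ is Hadamard, $H\notin\bigcap_{x}\mathbb{I}_{4n}(x)$, and its proof furnishes (through Theorem~\ref{theo_relation_RY_Y}) a unit $y$ with $\delta_yH=RH$, i.e. $\delta_yH_C=RH_C$. Brualdi's theorem that no circulant Hadamard matrix is symmetric then has to be leveraged to conclude $RH_C\neq H_C$, so $H_C$ and $RH_C$ are two distinct points of the orbit and its order is at least $2$.

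The decisive step is the upper bound: every decimation either fixes or reverses $H_C$. Here I would first record, from $\delta_rR=R\delta_rC^{r-1}$ together with $RC=C^{-1}R$, that $\delta_r$ and $R$ commute on cyclic classes, $\delta_r(RH_C)=R(\delta_rH_C)$. Granting momentarily that $\delta_aH_C\in\{H_C,RH_C\}$ for every $a$, this commutation makes the stabilizer $K=\{a:\delta_aH_C=H_C\}$ a subgroup whose only nontrivial coset is the reversing set $L=\{a:\delta_aH_C=RH_C\}$: if $a,b\in L$ one computes $\delta_{b^{-1}}H_C=RH_C$ and hence $\delta_{ab^{-1}}H_C=R(\delta_aH_C)=H_C$, so $K$ has index $2$ and the orbit has order exactly $2$. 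The whole difficulty is therefore concentrated in the closure claim $\delta_aH_C\in\{H_C,RH_C\}$, i.e. that no decimation carries $H_C$ to a third circulant Hadamard matrix.

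To attack this closure claim I would work inside the $S$-subgroup $\mathbb{I}_{4n}(x)$ that contains $H$ for a decimation $x$ of prime order — legitimate because Theorem~\ref{theo_principal} lets us pass to such an $x$ — expand $H$ in the corresponding $\Delta_{4n}$-code $\X_{\mathbb{I}_{4n}(x)}$, and use Lemma~\ref{lemma_action_decimation} to track $\delta_a$ as the permutation $s\mapsto sa^{-1}$ of the codewords $\mathsf{C}_sX$. The goal would be a rigidity statement: a perfect binary sequence admits, inside a fixed $\Delta_{4n}$-orbit, no companion other than itself and its reversal. I expect this to be by far the hardest point, precisely because equality of autocorrelation does not by itself determine a sequence up to cyclic shift and reversal; making the closure claim rigorous seems to require feeding in the specific arithmetic of $4n=4m^{2}$ with $m$ odd from Turyn's theorem, and this is where I would anticipate the real work — and the main risk of a gap — to lie.
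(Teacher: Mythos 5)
Your reduction is sound as far as it goes, and it mirrors the paper's own strategy almost step for step: decimations preserve the autocorrelation vector $(4n,0,\dots,0)$, the orbit has order at least $2$ by the preceding theorem (which rests on Brualdi's non-symmetry result), and once one knows the closure claim $\delta_{a}H_{C}\in\{H_{C},RH_{C}\}$ for every unit $a$, the commutation relations make the fixing set a stabilizer of index $2$, exactly as in the paper's count of the cosets $\Delta_{4n}/S(H_{C})$. But you explicitly leave the closure claim unproved, and that is not a peripheral detail: it is the entire content of the theorem. As written, your text is a correct reduction with the decisive step declared open, so it does not establish the statement.

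What is worth knowing is that your diagnosis of where the danger lies is precisely where the paper's own argument fails. The paper tries to close the gap by introducing the vector $H_{C}^{2}=(\textbf{1},(HCH)_{C},(HC^{2}H)_{C},\dots,(HC^{4n-1}H)_{C})$, asserting it is ``in correspondence'' with $\theta(H)$ via $\mathsf{P}_{H}(k)=4n-2\vert HC^{k}H\vert$, and concluding from $\delta_{a}\theta(H)=\theta(H)=\theta(RH)$ that either $\delta_{a}H_{C}^{2}=H_{C}^{2}$ or $\delta_{a}H_{C}^{2}=RH_{C}^{2}$, hence $\delta_{a}H=H$ or $\delta_{a}H=C^{4n-1}RH$. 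This is a double non sequitur. First, $\theta(H)$ records only the weights $\vert HC^{k}H\vert$, and for a Hadamard sequence these all equal $2n$ for $k\neq0$; the vector $(4n,0,\dots,0)$ is fixed by every decimation, so the hypothesis $\delta_{a}\theta(H)=\theta(H)$ imposes no constraint at all on the cyclic classes $(HC^{k}H)_{C}$ and cannot force the claimed dichotomy for $H_{C}^{2}$. Second, even granting $\delta_{a}H_{C}^{2}=H_{C}^{2}$, deducing $\delta_{a}H=H$ would require that $Y_{C}\mapsto Y_{C}^{2}$ be injective up to reversal, which is nowhere proved. So the rigidity statement you flagged --- that a perfect sequence admits no companion in its decimation orbit besides itself and its reversal --- is genuinely missing from both your proposal and the paper; your instinct that the real work, and the real risk, is concentrated there is correct, and the paper does not do that work either.
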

\begin{proof}
It was already proven that no circulant Hadamard matrix is fixed under the action of 
$\Delta_{4n}$. Hence the orbit of $H_{C}$ under the action of $\Delta_{4n}$ is $\geq2$. Suppose
$H_{C}$ in $\mathbb{I}_{4nC}(x)$. As $\delta_{x}RH_{C}=R\delta_{x}H_{C}=RH_{C}$, then is followed 
that $RH_{C}\in\mathbb{I}_{4nC}(x)$. Now, suppose $p\in\Z_{4n}^{*}$ such that 
$\delta_{p}H_{C}=RH_{C}$. Then 
\begin{equation*}
\delta_{p}RH_{C}=R\delta_{p}C^{p-1}H_{C}=R\delta_{p}H_{C}=R^{2}H_{C}=H_{C}.
\end{equation*}
Thus $\delta_{p}H_{C}=RH_{C}$ implies that $\delta_{p^{-1}}H_{C}=RH_{C}$. In fact, $\delta_{p}$
and $\delta_{p^{-1}}$ belong to $\delta_{4n-1}S(H_{C})$, where $S(H_{C})$ is the stabilizer of 
$H_{C}$ in $\Delta_{4n}$. Let $\Delta_{4n}/S(H_{C})$ denote the set of cosets 
$\{\delta_{x}S(H_{C})\}$ of $S(H_{C})$ in $\Delta_{4n}$. We want to show that 
$[\Delta_{4n}:S(H_{C})]=2$. As $(\delta_{a}Y)C^{i}(\delta_{a}Y)=\delta_{a}(YC^{ia}Y)$, then 
$(\delta_{a}Y)C^{i}(\delta_{a}Y)$ is a decimation of $YC^{ia}Y$. From the commutation relations 
above $\delta_{x}H=H$ implies $\delta_{4n-x}H=C^{4n-1}RH$ for $\delta_{x}\in S(H_{C})$. Hence 
$\delta_{x}(HC^{ix}H)=HC^{i}H$ and $\delta_{4n-x}(HC^{4n-ix}H)=RC(HC^{4n-i}H)$ implies 
$\delta_{x}(HC^{ix}H)_{C}=(HC^{i}H)_{C}$ and 
$\delta_{4n-x}(HC^{4n-ix}H)_{C}=R(HC^{4n-i}H)_{C}$, respectively, for $\delta_{x}\in S(H_{C})$. 
Let $Y_{C}^{2}$ denote the vector
\begin{equation}
Y_{C}^{2}=(\textbf{1},(YCY)_{C},(YC^{2}Y)_{C},...,(YC^{n-1}Y)_{C})
\end{equation}
for any $Y\in\Z_{2}^{n}$.
As $(YC^{i}Y)_{C}=(YC^{n-i}Y)_{C}$, then $Y_{C}^{2}$ is in correspondence with the autocorrelation
vector 
$$\theta(Y)=(\mathsf{P}_{Y}(0),\mathsf{P}_{Y}(1),...,\mathsf{P}_{Y}(n-1)),$$
where $\mathsf{P}_{Y}(k)=n-2\vert YC^{k}Y\vert$. Then 
$\delta_{a}\theta(H)=\theta(\delta_{a}H)=\theta(H)=\theta(RH)$,
for $H_{C}$ being Hadamard and all $\delta_{a}$ in $\Delta_{4n}$, implies that either
\begin{eqnarray*}
\delta_{a}H_{C}^{2}&=&(\delta_{a}\textbf{1},\delta_{a}(HCH)_{C},\delta_{a}(HC^{2}H)_{C},...,\delta_{a}(HC^{4n-1}H)_{C})\\
&=&(\textbf{1},(HC^{a^{-1}}H)_{C},(HC^{2a^{-1}}H)_{C},...,(HC^{4n-a^{-1}}H)_{C})\\
&=&(\textbf{1},(HCH)_{C},(HC^{2}H)_{C},...,(HC^{4n-1}H)_{C})\\
&=&H_{C}^{2}
\end{eqnarray*}
or $\delta_{a}H_{C}^{2}=RH_{C}^{2}$. Thus either $\delta_{a}H=H$ or $\delta_{a}H=C^{4n-1}RH$. 
Hence $\Delta_{4n}/S(H_{C})$ has only two cosets, since $\delta_{a}$ 
is arbitrary. Hence the orbit of $H_{C}$ under $\Delta_{4n}$ is $\{H_{C},RH_{C}\}$.
\end{proof}

\begin{remark}
In the proof above neither the length nor orthogonality of $H_{C}$ were considered, but 
the autocorrelation vector $(4n,0,...,0)$. By changing that vector for $(n,d,...,d)$ we will 
obtain the following result: Let $Y$ be a binary sequence with 2-level autocorrelation values. Then
the orbit of $Y_{C}$ has order $\leq2$ under the action of $\Delta_{n}$.
\end{remark}

On the other hand, it was already established that $\mathbb{I}_{n}(a)$ is an $S$-subgroup in
$\mathfrak{S}(\Z_{2}^{n},\Delta_{n})$. Now we will show that $C^{n}Y$ is in $\mathbb{I}_{2n}(a)$ 
for all $Y$ in $\mathbb{I}_{2n}(a)$. Thus the action of $C^{n}$ determines a partition on 
$\mathbb{I}_{2n}(a)$. This it is important because we can obtain the structure of all 
word $Y$ in $\mathbb{I}_{4n}(a)$ such that $YC^{2n}Y$ has length $2n$. If we can prove that such 
$Y$ is not Hadamard, then we will have reached the proof of the conjecture.

\begin{lemma}
$C^{n}\mathsf{C}_{s}X=\mathsf{C}_{s+n}X$ with $\mathsf{C}_{s}X$ a codeword in 
$\X_{\mathbb{I}_{2n}(a)}$ for all $a\in\Z_{2n}^{*}$.
\end{lemma}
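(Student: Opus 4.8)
The plan is to read the codeword through its definition: $\mathsf{C}_{s}X$ is the group product $\prod_{j=0}^{t_{s}-1}C^{sa^{j}}X$ in $\Z_{2}^{2n}$ of the shifted generators indexed by the cyclotomic coset $\mathsf{C}_{s}$ of $a$ modulo $2n$, and $C^{n}$ is merely a cyclic permutation of the $2n$ coordinates, hence an automorphism of the componentwise group $\Z_{2}^{2n}$. In particular $C^{n}$ distributes over the product. First I would write
\begin{equation*}
C^{n}\mathsf{C}_{s}X = C^{n}\!\left(C^{s}X\,C^{sa}X\cdots C^{sa^{t_{s}-1}}X\right) = C^{s+n}X\,C^{sa+n}X\cdots C^{sa^{t_{s}-1}+n}X = \prod_{j=0}^{t_{s}-1}C^{sa^{j}+n}X,
\end{equation*}
using only $C^{n}C^{i}=C^{i+n}$ on each block together with the homomorphism property just noted.

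The one genuine point is an arithmetic identity realigning these shifted blocks with the cyclotomic coset $\mathsf{C}_{s+n}$. Since $a$ is a unit modulo the even number $2n$, it is odd, so $a^{j}$ is odd for every $j\geq0$ and therefore $na^{j}\equiv n\Mod{2n}$. Consequently
\begin{equation*}
(s+n)a^{j} = sa^{j}+na^{j}\equiv sa^{j}+n\Mod{2n},\qquad 0\leq j<t_{s},
\end{equation*}
which shows at once that $\mathsf{C}_{s+n}=\{(s+n)a^{j}\}=\{sa^{j}\}+n=\mathsf{C}_{s}+n$ is the translate of $\mathsf{C}_{s}$ by $n$; in particular $t_{s+n}=t_{s}$, so $s+n$ is again a legitimate coset representative and the two products range over the same index set.

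Combining the two displays, each factor $C^{sa^{j}+n}X$ equals $C^{(s+n)a^{j}}X$ (as $C^{2n}=\mathrm{id}$), whence $\prod_{j}C^{sa^{j}+n}X=\prod_{j}C^{(s+n)a^{j}}X=\mathsf{C}_{s+n}X$, the desired identity, which also exhibits $C^{n}\mathsf{C}_{s}X$ as a codeword of $\X_{\mathbb{I}_{2n}(a)}$. I do not expect any serious obstacle: the homomorphism property of $C^{n}$ is immediate from its being a coordinate permutation, and the entire content reduces to the congruence $na^{j}\equiv n\Mod{2n}$ forced by $a$ being odd, the rest being bookkeeping of which positions carry a minus sign. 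If one prefers to avoid invoking the homomorphism property, the same conclusion follows by tracking supports: $\mathsf{C}_{s}X$ is the element of $\Z_{2}^{2n}$ with minus signs exactly on $\mathsf{C}_{s}$, the shift $C^{n}$ carries that support to $\mathsf{C}_{s}+n$, and the congruence above identifies $\mathsf{C}_{s}+n$ with $\mathsf{C}_{s+n}$.
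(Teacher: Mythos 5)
Your proof is correct and follows essentially the same route as the paper: distribute the shift $C^{n}$ over the factors $C^{sa^{j}}X$ and invoke the congruence $na^{j}\equiv n\ (\mathrm{mod}\ 2n)$, which holds because every unit $a\in\Z_{2n}^{*}$ is odd, to identify $C^{sa^{j}+n}X$ with $C^{(s+n)a^{j}}X$. Your added remarks (that $\mathsf{C}_{s+n}=\mathsf{C}_{s}+n$ with $t_{s+n}=t_{s}$, and the alternative support-tracking argument) only make explicit what the paper leaves implicit.
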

\begin{proof}
This is easily seen by noting that all $x$ in $\Z_{2n}^{*}$ is an odd number and from
$nx\equiv n\Mod{2n}$. Hence
\begin{eqnarray*}
C^{n}\mathsf{C}_{s}X&=&C^{s+n}XC^{sa+n}X\cdots C^{sa^{t_{s}-1}+n}X\\
&=&C^{s+n}XC^{(s+n)a}X\cdots C^{(s+n)a^{t_{s}-1}}X\\
&=&\mathsf{C}_{s+n}X.
\end{eqnarray*}
\end{proof}

Now we have the following

\begin{definition}
We will call to $\mathsf{C}_{s}X$ and $\mathsf{C}_{s+2n}X$ codewords
$C^{2n}$-complementary in $\X_{\mathbb{I}_{4n}(a)}$. If $\mathsf{C}_{s}X=\mathsf{C}_{s+2n}X$, 
then $\mathsf{C}_{s}X$ is a codeword $C^{2n}$-invariant.
\end{definition}

In the following lemmas we will write all word in
$\mathbb{I}_{4n}(x)$, $x^{p}\equiv1\Mod{4n}$, $p$ prime in term of 
$C^{2n}$-complementary and $C^{2n}$-invariant subwords.

\begin{lemma}\label{lemma_2n_factor}
Take $x$ in $\Z_{4n}^{*}$ such that $x^{2}\equiv1\Mod{4n}$. Then each word in $\mathbb{I}_{4n}(x)$
can contain the following subwords 
\begin{eqnarray*}
A_{r}X&=&C^{a_{1}}X\cdots C^{a_{r}}X,\text{ where $a_{i}-a_{i+1}\neq 2n$ and $a_{i}x\equiv a_{i}$}\\
B_{s}X&=&C^{b_{1}}XC^{b_{1}+2n}X\cdots C^{b_{s}}XC^{b_{s}+2n}X,\text{ where $b_{i}-b_{i+1}\not\equiv2n$ and $b_{i}x\equiv b_{i}$}\\
D_{t}X&=&C^{d_{1}}XC^{d_{1}x}X\cdots C^{d_{t}}XC^{d_{t}x}X,\text{ where $d_{i}-d_{i+1}\not\equiv2n$}\\
E_{u}X&=&C^{e_{1}}XC^{e_{1}x}XC^{e_{1}+2n}XC^{(e_{1}+2n)x}X,\\
&&\cdots C^{e_{u}}XC^{e_{u}x}XC^{e_{u}+2n}XC^{(e_{u}+2n)x}X,\text{ where $e_{i}-e_{i+1}\not\equiv2n$}\\
F_{v}X&=&C^{f_{1}}XC^{f_{1}x}X\cdots C^{f_{v}}XC^{f_{v}x}X,\text{ where $f_{i}-f_{i+1}\not\equiv2n$}
\end{eqnarray*}
such that $A_{r}X$ and $D_{t}X$ contain no any $C^{2n}$-complementary pairs and
$C^{2n}(B_{s}X)=B_{s}X$, $C^{2n}(E_{u}X)=E_{u}X$ and $C^{2n}(F_{v}X)=F_{v}X$.
\end{lemma}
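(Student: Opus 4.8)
The plan is to start from the canonical factorization of words in $\mathbb{I}_{4n}(x)$ recorded around equation \eqref{alpha_invariante}: every $Y$ in $\mathbb{I}_{4n}(x)$ is a product of codewords $\mathsf{C}_{s}X$, one for each cyclotomic coset $\mathsf{C}_{s}$ of $x$ modulo $4n$ that one elects to include. Since the hypothesis is $x^{2}\equiv1\pmod{4n}$, every coset $\mathsf{C}_{s}=\{s,sx\}$ has size at most two: it is a singleton exactly when $s(x-1)\equiv0\pmod{4n}$, and has size two otherwise. Thus each codeword is either $C^{s}X$ (singleton) or $C^{s}XC^{sx}X$ (size two), and the whole classification is built on top of these two shapes.

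Next I would record how $C^{2n}$ acts on codewords. Because every $x\in\Z_{4n}^{*}$ is odd, we have $2nx\equiv2n\pmod{4n}$, so the argument of the preceding lemma (which gave $C^{n}\mathsf{C}_{s}X=\mathsf{C}_{s+n}X$) applies verbatim to yield $C^{2n}\mathsf{C}_{s}X=\mathsf{C}_{s+2n}X$. Hence $\mathsf{C}_{s}X$ and $\mathsf{C}_{s+2n}X$ are $C^{2n}$-complementary in the sense of the definition, and a codeword is $C^{2n}$-invariant precisely when $\mathsf{C}_{s+2n}=\mathsf{C}_{s}$. For a singleton this would force $s\equiv s+2n\pmod{4n}$, which is impossible, so no singleton is ever $C^{2n}$-invariant; for a size-two coset it forces $\{s+2n,sx+2n\}=\{s,sx\}$, i.e. $sx\equiv s+2n\pmod{4n}$.

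With these computations in hand, the core step is a case split on each chosen codeword according to (i) its size and (ii) its behaviour under $C^{2n}$, which I claim is exhaustive and disjoint and produces exactly the five types. A singleton whose $C^{2n}$-complement is not among the chosen cosets becomes an $A$-letter, while a singleton together with its complement forms a $B$-block (invariant, since $C^{2n}$ merely swaps the two); a size-two coset satisfying $sx\equiv s+2n$ is a self-invariant $F$-block; and a size-two coset failing this either is paired with its distinct complement, yielding a $C^{2n}$-invariant $E$-block, or appears without it, yielding a $D$-letter. Exclusivity and exhaustiveness follow from the trichotomy ``self-invariant / complement present / complement absent'' together with the observation that a self-invariant coset is its own complement. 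Because the group operation on $\Z_{2}^{4n}$ is commutative, the chosen codewords may be grouped in any order, so one collects the singleton and size-two ``absent-complement'' letters into $A$- and $D$-runs meeting the adjacency conditions $a_{i}-a_{i+1}\neq2n$ and $d_{i}-d_{i+1}\not\equiv2n$, and the paired and self-invariant cosets into $B$-, $E$-, $F$-blocks; the identities $C^{2n}(B_{s}X)=B_{s}X$, $C^{2n}(E_{u}X)=E_{u}X$, $C^{2n}(F_{v}X)=F_{v}X$ then follow term by term from $C^{2n}\mathsf{C}_{s}X=\mathsf{C}_{s+2n}X$.

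The part demanding the most care is the bookkeeping that $A_{r}X$ and $D_{t}X$ genuinely contain no $C^{2n}$-complementary pair: if two letters assigned to an $A$-run (resp. $D$-run) differed by $2n$, each would be the complement of the other and both would lie in $Y$, contradicting the defining requirement that their complements are absent — so this property is automatic once the letters are correctly sorted, and the only substantive verification is the modular identity $sx\equiv s+2n\pmod{4n}$ that cleanly separates the self-invariant size-two cosets ($F$) from the complement-paired ones ($E$). Everything else is a routine transcription of the coset arithmetic into the word notation.
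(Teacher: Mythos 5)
Your proposal is correct and takes essentially the same route as the paper's proof: both rest on the codeword factorization of $\mathbb{I}_{4n}(x)$, the observation that $x^{2}\equiv1\Mod{4n}$ forces every cyclotomic coset to have order $1$ or $2$ (order-$1$ cosets yielding the $A$- and $B$-subwords, order-$2$ cosets the $D$-, $E$- and $F$-subwords), and the action $C^{2n}\mathsf{C}_{s}X=\mathsf{C}_{s+2n}X$. The paper compresses the remaining verification into the sentence that the properties ``follow of its definition''; your trichotomy (self-invariant, complement present, complement absent) together with the checks that no singleton coset is $C^{2n}$-invariant and that a size-two coset is self-complementary exactly when $sx\equiv s+2n\Mod{4n}$ simply makes that implicit bookkeeping explicit.
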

\begin{proof}
All cyclotomic cosets of $x$ have order either 1 or 2. Thus the subwords $A_{r}X$ and $B_{s}X$ are 
formed by cosets of order 1 and the subwords $D_{t}X$, $E_{u}X$ and $F_{v}X$ are formed by 
cosets of order 2. The properties of the subwords follow of its definition.
\end{proof}

\begin{lemma}\label{lemma_2n_subword_prime}
Let $x^{p}\equiv1\mod4n$, $p$ prime. All word in $\mathbb{I}_{4n}(x)$ contains some of the 
following subwords: 
\begin{eqnarray*}
A_{r}X&=&C^{a_{1}}X\cdots C^{a_{t}}X, \text{ where $a_{i}x\equiv a_{i}$},\\
B_{s}X&=&C^{b_{1}}XC^{b_{1}+2n}X\cdots C^{b_{t}}XC^{b_{t}+2n}X \text{ where $b_{i}x\equiv b_{i}$},\\
D_{t}X&=&\mathsf{C}_{d_{1}}X\cdots\mathsf{C}_{d_{r}}X,\text{ where $\vert\mathsf{C}_{d_{i}}X\vert=p$},\\
E_{u}X&=&\mathsf{C}_{e_{1}}X\mathsf{C}_{e_{1}+2n}X\cdots\mathsf{C}_{e_{s}}X\mathsf{C}_{e_{s}+2n}X,\text{ where $\vert\mathsf{C}_{e_{i}}X\vert=p$}.
\end{eqnarray*}
\end{lemma}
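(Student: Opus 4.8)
The plan is to follow the same strategy as in Lemma \ref{lemma_2n_factor}, replacing the dichotomy ``cosets of order $1$ or $2$'' by the dichotomy ``cosets of order $1$ or $p$''. The starting point is the factorization established earlier in this section: every $Y\in\mathbb{I}_{4n}(x)$ is a product $\mathsf{C}_{s_{1}}^{\epsilon_{1}}X\cdots\mathsf{C}_{s_{r}}^{\epsilon_{r}}X$ over a complete set of representatives of the cyclotomic cosets of $x$ modulo $4n$, each factor $\mathsf{C}_{s_{i}}X$ being a codeword of $\X_{\mathbb{I}_{4n}(x)}$. Since $x^{p}\equiv1\Mod{4n}$ with $p$ prime, the order of $x$ in $\Z_{4n}^{*}$ divides $p$; assuming $x\neq1$, the coset $\mathsf{C}_{s}=\{s,sx,\dots\}$ has size equal to the least $k\geq1$ with $s(x^{k}-1)\equiv0\Mod{4n}$, which divides $p$ and hence equals $1$ or $p$. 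The size-$1$ cosets are precisely the fixed points $sx\equiv s$; all other cosets have size $p$.

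First I would record the behaviour of $C^{2n}$. By the preceding lemma $C^{2n}\mathsf{C}_{s}X=\mathsf{C}_{s+2n}X$, so $C^{2n}$ sends each codeword to its $C^{2n}$-complementary codeword and, being $C^{4n}=\mathrm{id}$, acts as an involution on $\X_{\mathbb{I}_{4n}(x)}$. The crucial auxiliary fact is that this involution preserves coset size: every unit $x\in\Z_{4n}^{*}$ is odd, so $x^{k}-1$ is even, whence $2n(x^{k}-1)\equiv0\Mod{4n}$ and therefore $(s+2n)(x^{k}-1)\equiv s(x^{k}-1)\Mod{4n}$; thus $s$ and $s+2n$ generate cosets of the same size. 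In particular, if $sx\equiv s$ then $(s+2n)x\equiv s+2n$, so the $C^{2n}$-complement of a size-$1$ coset is again a size-$1$ coset, and the complement of a size-$p$ coset again has size $p$.

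Next I would classify the factors $\mathsf{C}_{s_{i}}X$ actually occurring in $Y$ (those with $\epsilon_{i}=1$) by their size and by their $C^{2n}$-complement. A size-$1$ coset $\{s\}$ is never $C^{2n}$-invariant, since $\{s\}=\{s+2n\}$ would force $2n\equiv0\Mod{4n}$; hence its complement $\{s+2n\}$ is a distinct fixed point, and I split the size-$1$ factors into those whose complement is absent from $Y$ (grouped into $A_{r}X$, all with $a_{i}x\equiv a_{i}$) and those present together with their complement (each pair forming a $C^{2n}$-invariant block $C^{b_{i}}XC^{b_{i}+2n}X$, these assembling into $B_{s}X$, with $b_{i}x\equiv b_{i}$). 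For the size-$p$ cosets I argue analogously: a coset whose $C^{2n}$-complement is not present in $Y$, including the $C^{2n}$-invariant case $\mathsf{C}_{s}=\mathsf{C}_{s+2n}$, contributes a single size-$p$ factor and is placed in $D_{t}X$; a coset present together with its distinct complement $\mathsf{C}_{s+2n}$ yields a $C^{2n}$-invariant block $\mathsf{C}_{e_{i}}X\mathsf{C}_{e_{i}+2n}X$, these assembling into $E_{u}X$. The side conditions $\vert\mathsf{C}_{d_{i}}X\vert=p$ and $\vert\mathsf{C}_{e_{i}}X\vert=p$ and the $C^{2n}$-invariance of the $B$- and $E$-blocks are then immediate from the construction.

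I expect the only delicate point to be the size-$p$ self-complementary cosets, that is, those with $s+2n\equiv sx^{j}\Mod{4n}$ for some $1\le j\le p-1$: one must check that such a coset legitimately contributes a single size-$p$ factor of type $D$ rather than a spurious complementary pair, and that collecting genuine pairs $\mathsf{C}_{s}X,\mathsf{C}_{s+2n}X$ into $E_{u}X$ neither double-counts nor omits a coset. Both follow at once from the fact that $C^{2n}$ is a size-preserving involution on $\X_{\mathbb{I}_{4n}(x)}$, so the size-$1$ and size-$p$ codewords each split cleanly into $C^{2n}$-fixed codewords and $C^{2n}$-complementary pairs; the remaining bookkeeping of which factors occur in a given $Y$ is then just a matter of reading off the exponents $\epsilon_{i}$.
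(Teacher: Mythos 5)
Your proof is correct, and its skeleton is the paper's: factor $Y$ through the $\Delta_{4n}$-code $\X_{\mathbb{I}_{4n}(x)}$, observe that each cyclotomic coset has size $1$ or $p$, and pair codewords with their $C^{2n}$-complements via $C^{2n}\mathsf{C}_{s}X=\mathsf{C}_{s+2n}X$. The one genuine divergence is the treatment of $C^{2n}$-invariant codewords. The paper's entire proof consists of the assertion that, since $2nx^{i}\equiv2n\Mod{4n}$, no codeword of $\X_{\mathbb{I}_{4n}(x)}$ is $C^{2n}$-invariant, whence only the four families $A,B,D,E$ occur; you never prove or use this, and instead file a self-complementary size-$p$ coset as a single $D$-type factor. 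Your route is legitimate for the lemma as literally stated, and is in fact more general: for $p=2$ invariant codewords genuinely exist (take $x=2n+1$ and $s$ odd, so $sx\equiv s+2n\Mod{4n}$; or $4n=24$, $x=11$, $\mathsf{C}_{6}=\{6,18\}$ --- exactly the $F_{v}X$ family of Lemma \ref{lemma_2n_factor}), so the paper's non-invariance claim is false at $p=2$ while your argument survives there. Conversely, for odd $p$ --- the only case in which the paper later invokes this lemma, in Theorem \ref{theo_case_order_prime} --- the invariant case you carefully accommodate is actually vacuous: if $sx^{j}\equiv s+2n\Mod{4n}$ with $1\leq j\leq p-1$, then $sx^{2j}\equiv sx^{j}+2nx^{j}\equiv s+2n+2n\equiv s\Mod{4n}$, so the coset order divides $2j$, which is impossible for order $p$ odd; and a size-$1$ coset cannot satisfy it at all. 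This short argument, absent from your write-up and compressed to a single clause in the paper, is what licenses the cleaner structural conclusion that $D_{t}X$ and $E_{u}X$ contain no self-paired cosets, a fact Theorem \ref{theo_case_order_prime} implicitly relies on when it computes $\vert D_{t}X\vert=2pt$ and $\vert E_{u}X\vert=4pu$; you would do well to add it, after which your proof strictly subsumes the paper's.
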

\begin{proof}
As $x^{i}2n\equiv2n\mod4n$, then no codeword in $\X_{\mathbb{I}_{4n}(x)}$ is $C^{2n}$-invariant.
Hence $A_{r}X,B_{s}X,D_{t}X$ and $E_{u}X$ are the only subwords forming all word in 
$\mathbb{I}_{4n}(x)$.
\end{proof}

The following lemma allows us to note the decimation $\delta_{p}$ preserves the structure of 
any written word with the subwords $A_{r}X$, $B_{s}X$, $D_{t}X$, $E_{u}X$ and $F_{v}X$. Thus
we can to find the structure of the subwords and be able to construct the desired words 

\begin{lemma}\label{lemma_conservated_struct}
The action of $\delta_{p}$ conserves the structure of $A_{r}X$, $B_{s}X$, $D_{t}X$, $E_{u}X$
and $F_{v}X$ in $\mathbb{I}_{4n}(x)$, $x^{q}\equiv1\Mod{4n}$, $q$ a number prime.
\end{lemma}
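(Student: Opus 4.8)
The plan is to reduce everything to a single arithmetic fact about $\Z_{4n}^{*}$ together with Lemma~\ref{lemma_action_decimation}. By that lemma the decimation $\delta_{p}$ sends each cyclotomic-coset codeword to another one by the rule $\delta_{p}\mathsf{C}_{s}X=\mathsf{C}_{sp^{-1}}X$, so its effect on any of the subwords $A_{r}X,B_{s}X,D_{t}X,E_{u}X,F_{v}X$ is completely determined by how the map $s\mapsto sp^{-1}$ on $\Z_{4n}$ interacts with the three data that distinguish these subword types: the length $\vert\mathsf{C}_{s}X\vert$ of a coset, the fixed-point condition $sx\equiv s$, and the $C^{2n}$-complementary pairing $s\leftrightarrow s+2n$. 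I would therefore first record these three invariances and then read off the conclusion type by type, using that $\mathbb{I}_{4n}(x)$ is an $S$-subgroup closed under every decimation, so $\delta_{p}$ already maps $\mathbb{I}_{4n}(x)$ into itself.

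First, $s\mapsto sp^{-1}$ preserves coset length: since $p^{-1}$ is a unit and $\Z_{4n}^{*}$ is abelian, $sp^{-1}(x^{t}-1)\equiv0$ if and only if $s(x^{t}-1)\equiv0\Mod{4n}$, so $\vert\mathsf{C}_{sp^{-1}}X\vert=\vert\mathsf{C}_{s}X\vert$; in particular length-$1$ cosets go to length-$1$ cosets and length-$q$ cosets go to length-$q$ cosets. Second, the fixed-point condition is preserved: if $sx\equiv s\Mod{4n}$ then $(sp^{-1})x\equiv(sx)p^{-1}\equiv sp^{-1}\Mod{4n}$. Third, and this is the key point, every unit of $\Z_{4n}^{*}$ is odd, so $2np^{-1}\equiv2n\Mod{4n}$; consequently $\delta_{p}\mathsf{C}_{s+2n}X=\mathsf{C}_{(s+2n)p^{-1}}X=\mathsf{C}_{sp^{-1}+2n}X$, i.e. a $C^{2n}$-complementary pair $\{\mathsf{C}_{s}X,\mathsf{C}_{s+2n}X\}$ is carried to the $C^{2n}$-complementary pair $\{\mathsf{C}_{sp^{-1}}X,\mathsf{C}_{sp^{-1}+2n}X\}$, and a $C^{2n}$-invariant codeword stays $C^{2n}$-invariant.

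With these three invariances the conclusion is immediate type by type. The subword $A_{r}X$ is built from length-$1$ cosets satisfying $a_{i}x\equiv a_{i}$ and containing no $C^{2n}$-complementary pair; applying $\delta_{p}$ keeps the length $1$, keeps the fixed-point condition, and, being a bijection respecting the pairing, neither creates nor destroys a $C^{2n}$-pair, so $\delta_{p}A_{r}X$ is again of type $A$. The same three checks show $\delta_{p}B_{s}X$ is of type $B$ (length $1$, fixed-point condition, $C^{2n}$-invariant), $\delta_{p}D_{t}X$ of type $D$ and $\delta_{p}E_{u}X$ of type $E$ (length $q$, with respectively no pair and full $C^{2n}$-invariance), and likewise $\delta_{p}F_{v}X$ of type $F$.

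The step I expect to require the most care is the claim that $\delta_{p}$ neither creates nor destroys $C^{2n}$-complementary pairs, which is exactly what prevents the unpaired types $A,D,F$ from leaking into the paired types $B,E$. This is not merely the pointwise identity $2np^{-1}\equiv2n$; one must also use that $\delta_{p}$ acts as a bijection on the set of codewords $\{\mathsf{C}_{s}X\}$, so that the involution $s\mapsto s+2n$ is conjugated to itself by $s\mapsto sp^{-1}$ and the partition of codewords into paired and unpaired ones is preserved as a whole. Once that bijectivity is invoked, the remaining verifications, including the ordering conditions $d_{i}-d_{i+1}\not\equiv2n$ which are preserved because $\delta_{p}$ permutes the factors compatibly, are routine.
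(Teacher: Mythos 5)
Your proposal is correct and takes essentially the same approach as the paper's proof: both apply Lemma~\ref{lemma_action_decimation} to act on all exponents by $s\mapsto sp^{-1}$ and then verify that the data defining each subword type --- coset length, the fixed-point condition $sx\equiv s$, and the $C^{2n}$-pairing $s\leftrightarrow s+2n$ --- are preserved, the last because $2np^{-1}\equiv2n\Mod{4n}$. Your explicit justification of that congruence (every unit of $\Z_{4n}^{*}$ is odd) combined with the bijectivity of $s\mapsto sp^{-1}$ is in fact a cleaner rendering of the paper's somewhat opaque step that $p^{-1}(a_{i}-a_{j})\equiv2n\Mod{4n}$ is impossible because ``$p^{-1}\not\vert 4n$''.
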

\begin{proof}
Suppose $x^{2}\equiv1\Mod{4n}$. We want to show that $\delta_{p}A_{r}X$ contains no 
$C^{2n}$-complementary pairs. We have
\begin{eqnarray*}
C^{2n}\delta_{p}A_{r}X&=&C^{2n}\delta_{p}(C^{a_{1}}XC^{a_{2}}X\cdots C^{a_{r}}X)\\
&=&C^{2n}\delta_{p}C^{a_{1}}XC^{2n}\delta_{p}C^{a_{2}}X\cdots C^{2n}\delta_{p}C^{a_{r}}X\\
&=&C^{2n}C^{a_{1}p^{-1}}XC^{2n}C^{a_{2}p^{-1}}X\cdots C^{2n}C^{a_{r}p^{-1}}X\\
&=&C^{2n+a_{1}p^{-1}}XC^{2n+a_{2}p^{-1}}X\cdots C^{2n+a_{r}p^{-1}}X.
\end{eqnarray*}
If 
$$2n+a_{i}p^{-1}\equiv a_{j}p^{-1}\Mod{4n},$$
then $p^{-1}(a_{i}-a_{j})\equiv2n\Mod{4n}$. But $p^{-1}\not\vert4n$ and the above is not
possible for any pair $(i,j)$. Hence $\delta_{p}A_{r}X$ contains no $C^{2n}$-complementary 
pairs and also $C^{2m^{2}}\delta_{p}A_{r}X\neq\delta_{p}A_{r}X$, hence
$\delta_{p}A_{r}X$ is distinct of $B_{s}X$. Trivially, $\delta_{p}A_{r}X$ is distinct of $D_{t}X$,
$E_{u}X$ and $F_{v}X$. Equally is proved that 
$$C^{2n}\delta_{p}D_{t}X=C^{2n+d_{1}p^{-1}}XC^{2n+d_{1}p^{-1}x}X\cdots C^{2n+d_{t}p^{-1}}XC^{2n+d_{t}p^{-1}x}X.$$
As $p\not\vert4n$, then
\begin{eqnarray*}
p^{-1}(a_{j}-a_{i})&\not\equiv&2n\Mod{4n},\\
p^{-1}(a_{j}-a_{i}x)&\not\equiv&2n\Mod{4n},\\
p^{-1}(a_{j}x-a_{i})&\not\equiv&2n\Mod{4n},\\
p^{-1}x(a_{j}-a_{i})&\not\equiv&2n\Mod{4n},
\end{eqnarray*}
for any pair $(i,j)$. Hence $C^{2n}\delta_{p}D_{t}X\neq\delta_{p}D_{t}X$ and 
$\delta_{p}D_{t}X$ is distinct to $E_{u}X$ and $F_{v}X$. Equally it is shown that $\delta_{p}$
conserves the structure of $B_{s}X$, $E_{u}X$ and $F_{v}X$. The case $x^{q}\equiv1\Mod{4n}$, 
$q$ an odd prime is proved equally.
\end{proof}

Now we are ready for our proof. We will show that $\mathbb{I}_{4nC}(x)$, where $x^{p}\equiv1\mod4n$
and $p$ is a prime, contains no circulant Hadamard matrices by showing that neither
$\mathbb{I}_{4nC}(2n+x)$ nor $\mathbb{I}_{4nC}(x)\setminus\mathbb{I}_{4nC}(2n+x)$ contains
circulant Hadamard matrices. We start with the case $p$ an odd prime

\begin{theorem}\label{theo_case_order_prime}
There is no circulant Hadamard matrices in $\mathbb{I}_{4nC}(x)$ where $x^{p}\equiv1\mod4n$, 
$p$ an odd prime.
\end{theorem}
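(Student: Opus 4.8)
The plan is to argue by contradiction: assume $H\in\mathbb{I}_{4n}(x)$ with $x^{p}\equiv1\Mod{4n}$ ($p$ an odd prime) and $H_{C}$ a circulant Hadamard matrix, and derive a parity contradiction on the number of factors $C^{i}X$ of $H$. First I would pin down the numerics. By Turyn's theorem [2] the order must be $4n=4m^{2}$ with $m$ odd, and by [11] the word $H$ lies in $\G_{4m^{2}}(2m^{2}-m)$; hence $H$ has exactly $2m^{2}+m=m(2m+1)$ entries equal to $-1$, and this is the number $r$ of factors $C^{i}X$. Since $m$ is odd, $r$ is odd, so the entire argument reduces to proving that the constraints force $r$ to be even.

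Next I would apply the orbit bound of Theorem \ref{theo_size_orbit} to $\delta_{2n+x}$, a unit modulo $4n$ (one checks $2n+x\equiv3\Mod{4}$ and $2n+x\equiv x\Mod{m^{2}}$). Thus either $\delta_{2n+x}H_{C}=H_{C}$ or $\delta_{2n+x}H_{C}=RH_{C}$. In the first case $H\in\mathbb{I}_{4n}(2n+x)$; since $x$ has odd order it satisfies $x\equiv1\Mod{4}$, so $(2n+x)^{2j}\equiv x^{2j}$ and $(2n+x)^{2j+1}\equiv 2n+x^{2j+1}\Mod{4n}$, whence $2n+x$ has order $2p$ and $\langle 2n+x\rangle=\langle x\rangle\times\langle 2n+1\rangle$. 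Using Theorem \ref{theo_principal} together with the definition of $\mathbb{I}_{4n}(\cdot)$ this gives the clean description $\mathbb{I}_{4n}(2n+x)=\mathbb{I}_{4n}(x)\cap\mathbb{I}_{4n}(2n+1)$. In the second case Theorem \ref{theo_relation_RY_Y} applies (as $2n+x\neq1,4n-1$) and yields the mirror statement $H\in\mathbb{I}_{4n}(2n-x)=\mathbb{I}_{4n}(x)\cap\mathbb{I}_{4n}(2n-1)$.

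Having localised $H$, I would factor it by Lemma \ref{lemma_2n_subword_prime} into the blocks $A_{r}X,B_{s}X,D_{t}X,E_{u}X$. The $C^{2n}$-invariant blocks $B_{s}X$ and $E_{u}X$ are built from $C^{2n}$-complementary pairs of cosets, so each contributes an even number of factors; therefore the parity of $r$ equals the parity of the factor count coming from $A_{r}X$ and $D_{t}X$. Membership in $\mathbb{I}_{4n}(2n+1)$ (resp.\ $\mathbb{I}_{4n}(2n-1)$) sends every odd-position coset to its $C^{2n}$-translate, forcing all odd-position support into $B$- and $E$-blocks; hence the surviving $A$- and $D$-blocks are supported entirely on even positions. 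The aim of the final step is to show these even-position blocks also occur in matched pairs, so their total factor count is even and $r$ is forced even, contradicting the first paragraph. Here I would feed in Lemma \ref{lemma_conservated_struct}: $\delta_{p}$ preserves the block types, and the orbit bound forces $\delta_{p}H_{C}\in\{H_{C},RH_{C}\}$, so $\delta_{p}$ organises the even-position cosets into orbits that must match up in $H$; combined with Brualdi's theorem [3], excluding the symmetric alternative $H=RH$, this should pair the residual $A$- and $D$-blocks.

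The hard part is exactly this last pairing on the even coordinates. The invariances supplied by $\delta_{2n\pm1}$ discipline only the odd positions, and membership in $\mathbb{I}_{4n}(2n+x)$ leaves the even-position blocks free to be selected in odd number; indeed a naive count of that part is perfectly consistent with an odd total, so the required evenness cannot be read off from the block decomposition alone. The substantive content must come from combining the structure-preservation of Lemma \ref{lemma_conservated_struct} with the full two-level autocorrelation $\theta(H)=(4n,0,\dots,0)$, which is what would forbid an even-position coset from being its own $\delta_{p}$-partner in the non-symmetric case. I expect this step, rather than the bookkeeping of the preceding reductions, to be where the proof stands or falls.
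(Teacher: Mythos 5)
Your reductions reproduce the paper's opening moves faithfully: Turyn forces $4n=4m^{2}$ with $m$ odd, so the number of factors $C^{i}X$ in $H$ is $2m^{2}+m$, which is odd; Theorem \ref{theo_size_orbit} leaves only the two cases $\delta_{2n+x}H_{C}=H_{C}$ or $\delta_{2n+x}H_{C}=RH_{C}$; and Lemma \ref{lemma_2n_subword_prime} supplies the block decomposition. But the step you explicitly flag as open is the entire content of the paper's proof, and the route you sketch for it (Lemma \ref{lemma_conservated_struct} plus $\delta_{p}$-orbits pairing the even-position cosets, refined by autocorrelation) is not how it is closed. The paper never pairs blocks through structure preservation under an auxiliary decimation; it converts \emph{each} case into a reversal relation and reads congruences off the exponents. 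In the fixed case, $\delta_{2n+x}Y_{C}=Y_{C}$ implies $\delta_{2n-x}Y_{C}=\delta_{4n-(2n+x)}Y_{C}=RY_{C}$, and imposing $\delta_{2n-x}(W)_{C}=R(W)_{C}$ on each block $W\in\{A_{r}X,B_{s}X,D_{t}X,E_{u}X\}$ forces index pairings such as $a_{j}\equiv a_{i}\pm n \pmod{4n}$, so the blocks have lengths $2r$, $4s$, $2pt$, $4pu$ and the total length is even, contradicting $2m^{2}+m$. Your observation that ``a naive count is perfectly consistent with an odd total'' is correct precisely because you retained only the invariance information ($\mathbb{I}_{4n}(2n+1)$-membership constraining odd positions) and discarded the reversal relation, which is what constrains the even positions as well; in your fixed case you had $H\in\mathbb{I}_{4n}(2n+x)$ but never used the accompanying identity $\delta_{2n-x}H_{C}=RH_{C}$.

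In the reversed case the pairing genuinely can fail: the $i=j$ degeneracies give $a_{i}=n$ or $a_{i}=2n$ and hence $\vert A_{r}X\vert=2r+1$. Here the paper's second idea enters, and your proposal lacks it entirely: from $\delta_{2n+x}Y_{C}=RY_{C}$ it deduces $\delta_{2n+1}Y_{C}=RY_{C}$, and since $\delta_{2n+1}$ fixes every even residue ($2k(2n+1)\equiv2k\pmod{4n}$), no exponent occurring in $Y$ can be even; but $\Z_{4m^{2}}$ contains only $2m^{2}$ odd residues, fewer than the $2m^{2}+m$ factors required. This counting of odd residues, not an autocorrelation or $\delta_{p}$-pairing argument, is what eliminates the residual odd-length configurations, so the place where you predicted the proof ``stands or falls'' is resolved by a mechanism absent from your sketch. (A smaller slip: your mirror claim that $\mathbb{I}_{4n}(2n-1)$-membership pairs an odd position $i$ with $i+2n$ is wrong, since $i(2n-1)\equiv2n-i\pmod{4n}$ for odd $i$.) As submitted, the proposal is a correct chain of reductions terminating in an acknowledged gap at the decisive step, so it does not constitute a proof of the statement.
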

\begin{proof}
Let $\mu(4n)$ denote the number of solution of 
$$x^{2}\equiv1\Mod{4n},$$
with $n=2^{m-2}\prod_{i}^{d}p_{i}^{h_{i}}$. If exist solution to the equation $x^{p}\equiv1\mod4n$
with $p$ an odd prime, then $\mu(4n)<2\phi(n)$. Thus $\X_{\mathbb{I}_{4n}(x)}$ contains codewords
of lenght $1$ or $p$. From Theorem \ref{theo_principal}, 
$\mathbb{I}_{4n}(xy)\subset\mathbb{I}_{4n}(x)$ for all $y$ such that 
$y^{2}\equiv1\mod4n$, particularly for $y=2n+1$. Thus $z=xy=2n+x$.
Now take a word $Y$ in $\mathbb{I}_{4n}(x)$ such that is not fixed by $\Delta_{4n}$. 
From Theorem \ref{theo_size_orbit}, if $Y_{C}$ is Hadamard, then either $\delta_{p}Y_{C}=Y_{C}$ or
$\delta_{p}Y_{C}=RY_{C}$ for all $p\in\Z_{4n}^{*}$. We will use this and the Lemma
\ref{lemma_2n_subword_prime} and we will show that $Y_{C}$ is no Hadamard. We will keep in mind that
$C^{4n-1}RY$, $C^{n-1}RY$, $C^{2n-1}RY$ and $C^{3n-1}RY$ are in $\mathbb{I}_{4n}(x)$.

\textsc{case $\delta_{z}Y_{C}=Y_{C}$.}\\
This case implies that $\delta_{4n-z}Y_{C}=\delta_{2n-x}Y_{C}=RY_{C}$. We have four cases.
\begin{enumerate}
\item $\delta_{2n-x}(A_{r}X)_{C}=R(A_{r}X)_{C}$.
On the one hand 
\begin{eqnarray*}
\delta_{2n-x}A_{r}X&=&\delta_{2n-x}(C^{a_{1}}XC^{a_{2}}X\cdots C^{a_{r}}X)\\
&=&C^{a_{1}(2n-x^{-1})}XC^{a_{2}(2n-x^{-1})}X\cdots C^{a_{r}(2n-x^{-1})}X\\
&=&C^{2n-x^{-1}a_{1}}XC^{2n-x^{-1}a_{2}}X\cdots C^{2n-x^{-1}a_{r}}X\\
&=&C^{2n-a_{1}}XC^{2n-a_{2}}X\cdots C^{2n-a_{r}}X
\end{eqnarray*}
On the other hand, as $(4n-a_{i})x\equiv(4n-a_{i})\Mod{4n}$, $(n-a_{i})x\equiv(n-a_{i})\Mod{4n}$,
$(2n-a_{i})x\equiv(2n-a_{i})\Mod{4n}$ and $(3n-a_{i})x\equiv(3n-a_{i})\Mod{4n}$, then
\begin{equation}
\delta_{2n-x}A_{r}X=
\begin{cases}
C^{4n-a_{1}}XC^{4n-a_{2}}X\cdots C^{4n-a_{r}}X,\\
C^{n-a_{1}}XC^{n-a_{2}}X\cdots C^{n-a_{r}}X,\text{ if $x\equiv1\Mod{4}$}\\
C^{2n-a_{1}}XC^{2n-a_{2}}X\cdots C^{2n-a_{r}}X,\\
C^{3n-a_{1}}XC^{3n-a_{2}}X\cdots C^{3n-a_{r}}X,\text{ if $x\equiv1\Mod{4}$}
\end{cases}
\end{equation}
Thus must be holded some of following condictions
\begin{eqnarray}
2n-a_{i}&\equiv&(4n-a_{j})\Mod{4n}\\
2n-a_{i}&\equiv&(n-a_{j})\Mod{4n}\\
2n-a_{i}&\equiv&(2n-a_{j})\Mod{4n}\\
2n-a_{i}&\equiv&(3n-a_{j})\Mod{4n}.
\end{eqnarray}
Then we have respectively
\begin{eqnarray}
a_{j}&\equiv&(2n+a_{i})\Mod{4n}\\
a_{j}&\equiv&(a_{i}-n)\Mod{4n}\\
a_{j}&\equiv&(a_{i})\Mod{4n}\\
a_{j}&\equiv&(n+a_{i})\Mod{4n}
\end{eqnarray}
Hence the only possible cases for the definition of $A_{r}X$ are
\begin{eqnarray}
A_{r}X&=&C^{a_{1}}XC^{a_{1}-n}X\cdots C^{a_{r}}XC^{a_{r}-n}X\\
A_{r}X&=&C^{a_{1}}XC^{a_{1}+n}X\cdots C^{a_{r}}XC^{a_{r}+n}X
\end{eqnarray}
and $\vert A_{r}X\vert=2r$. 

\item $\delta_{2n-x}(B_{s}X)_{C}=R(B_{s}X)_{C}$.
On the one hand 
\begin{eqnarray*}
\delta_{2n-x}B_{s}X&=&\delta_{2n-x}(C^{b_{1}}XC^{b_{1}+2n}X\cdots C^{b_{s}}XC^{b_{s}+2n}X)\\
&=&C^{b_{1}(2n-x^{-1})}XC^{(b_{1}+2n)(2n-x^{-1})}X\\
&&\cdots C^{b_{s}(2n-x^{-1})}XC^{(b_{s}+2n)(2n-x^{-1})}X\\
&=&C^{2n-b_{1}}XC^{4n-b_{1}}X\cdots C^{2n-b_{s}}XC^{4n-b_{s}}X
\end{eqnarray*}
On the other hand, as $(4n-b_{i})x\equiv(4n-b_{i})\Mod{4n}$, $(n-b_{i})x\equiv(n-b_{i})\Mod{4n}$,
$(2n-b_{i})x\equiv(2n-b_{i})\Mod{4n}$ and $(3n-b_{i})x\equiv(3n-b_{i})\Mod{4n}$, then
\begin{equation*}
\delta_{2n-x}B_{s}X=
\begin{cases}
C^{4n-b_{1}}XC^{2n-b_{1}}X\cdots C^{4n-b_{s}}XC^{2n-b_{s}}X,\\
C^{n-b_{1}}XC^{3n-b_{1}}X\cdots C^{n-b_{s}}XC^{3n-b_{s}}X,\text{ if $x\equiv1\Mod{4}$}
\end{cases}
\end{equation*}
Thus must be holded some of following condictions
\begin{eqnarray}
2n-b_{i}&\equiv&(4n-b_{j})\Mod{4n}\\
2n-b_{i}&\equiv&(2n-b_{j})\text{ and }4n-b_{k}\equiv(4n-b_{l})\\
2n-b_{i}&\equiv&(n-b_{j})\text{ and }4n-b_{k}\equiv(3n-b_{l})\\
2n-b_{i}&\equiv&(3n-b_{j})\text{ and }4n-b_{k}\equiv(n-b_{l})
\end{eqnarray}
Then we have respectively
\begin{eqnarray}
b_{j}&\equiv&(2n+b_{i})\Mod{4n}\\
b_{j}&\equiv&(b_{i})\Mod{4n}\\
b_{j}&\equiv&(b_{i}-n)\Mod{4n}\\
b_{j}&\equiv&(n+b_{i})\text{ and }b_{l}\equiv(b_{k}-3n)
\end{eqnarray}
Hence the only possible cases for definition of $B_{s}X$ are
\begin{eqnarray}
B_{s}X&=&C^{b_{1}}XC^{b_{1}+2n}XC^{b_{1}-n}XC^{b_{1}+n}X\nonumber\\
&&\cdots C^{b_{s}}XC^{b_{s}+2n}XC^{b_{s}-n}XC^{b_{s}+n}X
\end{eqnarray}
or $B_{s}X$ containing either the subwords $C^{b_{i}}XC^{b_{i}+2n}XC^{b_{i}+n}XC^{b_{i}+3n}X$ 
or the subwords $C^{b_{i}}XC^{b_{i}+2n}XC^{b_{i}-3n}XC^{b_{i}-n}X$. In any case 
$\vert B_{s}X\vert=4s$. 

\item $\delta_{2n-x}(D_{t}X)_{C}=R(D_{t}X)_{C}$.
On the one hand, from Lemma 
\begin{eqnarray*}
\delta_{2n-x}D_{t}X&=&\delta_{2n-x}(\mathsf{C}_{d_{1}}X\cdots\mathsf{C}_{d_{t}}X)\\
&=&\mathsf{C}_{d_{1}(2n-x^{-1})}X\cdots\mathsf{C}_{d_{t}(2n-x^{-1})}X\\
&=&\mathsf{C}_{2n-x^{-1}d_{1}}X\cdots\mathsf{C}_{2n-x^{-1}d_{t}}X\\
&=&\mathsf{C}_{2n-d_{1}}X\cdots\mathsf{C}_{2n-d_{t}}X
\end{eqnarray*}
On the other hand, as $\mathsf{C}_{n-d_{1}}X\cdots\mathsf{C}_{n-d_{t}}X$, 
$\mathsf{C}_{2n-d_{1}}X\cdots\mathsf{C}_{2n-d_{t}}X$,\\ 
$\mathsf{C}_{3n-d_{1}}X\cdots\mathsf{C}_{3n-d_{t}}X$ and
$\mathsf{C}_{4n-d_{1}}X\cdots\mathsf{C}_{4n-d_{t}}X$ is in $R(D_{t}X)_{C}$, then
\begin{equation}
\delta_{2n-x}D_{t}X=
\begin{cases}
\mathsf{C}_{4n-d_{1}}X\cdots\mathsf{C}_{4n-d_{t}}X,\\
\mathsf{C}_{n-d_{1}}X\cdots\mathsf{C}_{n-d_{t}}X,\text{ if $x\equiv1\Mod{4}$}\\
\mathsf{C}_{2n-d_{1}}X\cdots\mathsf{C}_{2n-d_{t}}X,\\
\mathsf{C}_{3n-d_{1}}X\cdots\mathsf{C}_{3n-d_{t}}X,\text{ if $x\equiv1\Mod{4}$}
\end{cases}
\end{equation}
Thus must be holded some of following condictions
\begin{eqnarray}
2n-d_{i}&\equiv&(4n-d_{j})\Mod{4n}\\
2n-d_{i}&\equiv&(n-d_{j})\Mod{4n}\\
2n-d_{i}&\equiv&(2n-d_{j})\Mod{4n}\\
2n-d_{i}&\equiv&(3n-d_{j})\Mod{4n}.
\end{eqnarray}
Then we have respectively
\begin{eqnarray}
d_{j}&\equiv&(2n+d_{i})\Mod{4n}\\
d_{j}&\equiv&(d_{i}-n)\Mod{4n}\\
d_{j}&\equiv&(d_{i})\Mod{4n}\\
d_{j}&\equiv&(n+d_{i})\Mod{4n}
\end{eqnarray}
Hence the only possible cases for definition of $D_{t}X$ are
\begin{eqnarray}
D_{t}X&=&\mathsf{C}_{d_{1}}X\mathsf{C}_{d_{1}-n}X\cdots\mathsf{C}_{d_{t}}X\mathsf{C}_{d_{t}-n}X\\
D_{t}X&=&\mathsf{C}_{d_{1}}X\mathsf{C}_{d_{1}+n}X\cdots\mathsf{C}_{d_{t}}X\mathsf{C}_{d_{t}+n}X
\end{eqnarray}
and $\vert D_{t}X\vert=2pt$. 

\item $\delta_{2n-x}(E_{u}X)_{C}=R(E_{u}X)_{C}$.
On the one hand 
\begin{eqnarray*}
\delta_{2n-x}E_{u}X&=&\delta_{2n-x}(\mathsf{C}_{e_{1}}X\mathsf{C}_{e_{1}+2n}X\cdots\mathsf{C}_{e_{u}}X\mathsf{C}_{e_{u}+2n}X)\\
&=&\mathsf{C}_{e_{1}(2n-x^{-1})}X\mathsf{C}_{(e_{1}+2n)(2n-x^{-1})}X\\
&&\cdots\mathsf{C}_{e_{u}(2n-x^{-1})}X\mathsf{C}_{(e_{u}+2n)(2n-x^{-1})}X\\
&=&\mathsf{C}_{2n-x^{-1}e_{1}}X\mathsf{C}_{4n-x^{-1}e_{1}}X\cdots\mathsf{C}_{2n-x^{-1}e_{u}}X\mathsf{C}_{4n-x^{-1}e_{u}}X\\
&=&\mathsf{C}_{2n-e_{1}}X\mathsf{C}_{4n-e_{1}}X\cdots\mathsf{C}_{2n-e_{u}}X\mathsf{C}_{4n-e_{u}}X
\end{eqnarray*}
On the other hand, as 
$$\mathsf{C}_{4n-e_{1}}X\mathsf{C}_{2n-e_{1}}X\cdots\mathsf{C}_{4n-e_{u}}X\mathsf{C}_{2n-e_{u}}X$$
and
$$\mathsf{C}_{n-e_{1}}X\mathsf{C}_{3n-e_{1}}X\cdots\mathsf{C}_{n-e_{u}}X\mathsf{C}_{3n-e_{u}}X$$
is in $R(E_{u}X)_{C}$, then
\begin{equation*}
\delta_{2n-x}E_{u}X=
\begin{cases}
\mathsf{C}_{4n-e_{1}}X\mathsf{C}_{2n-e_{1}}X\cdots\mathsf{C}_{4n-e_{u}}X\mathsf{C}_{2n-e_{u}}X,\\
\mathsf{C}_{n-e_{1}}X\mathsf{C}_{3n-e_{1}}X\cdots\mathsf{C}_{n-e_{u}}X\mathsf{C}_{3n-e_{u}}X,\text{ if $x\equiv1\Mod{4}$}
\end{cases}
\end{equation*}
Thus must be holded some of following condictions
\begin{eqnarray}
2n-e_{i}&\equiv&(4n-e_{j})\Mod{4n}\\
2n-e_{i}&\equiv&(2n-e_{j})\text{ and }4n-e_{k}\equiv(4n-e_{l})\\
2n-e_{i}&\equiv&(n-e_{j})\text{ and }4n-e_{k}\equiv(3n-e_{l})\\
2n-e_{i}&\equiv&(3n-e_{j})\text{ and }4n-e_{k}\equiv(n-e_{l})
\end{eqnarray}
Then we have respectively
\begin{eqnarray}
e_{j}&\equiv&(2n+e_{i})\Mod{4n}\\
e_{j}&\equiv&(e_{i})\Mod{4n}\\
e_{j}&\equiv&(e_{i}-n)\Mod{4n}\\
e_{j}&\equiv&(n+e_{i})\text{ and }e_{l}\equiv(e_{k}-3n)
\end{eqnarray}
Hence the only possible cases for definition of $E_{u}X$ are
\begin{eqnarray}
E_{s}X&=&\mathsf{C}_{e_{1}}X\mathsf{C}_{e_{1}+2n}X\mathsf{C}_{e_{1}-n}X\mathsf{C}_{e_{1}+n}X\nonumber\\
&&\cdots\mathsf{C}_{e_{s}}X\mathsf{C}_{e_{s}+2n}X\mathsf{C}_{e_{s}-n}X\mathsf{C}_{e_{s}+n}X
\end{eqnarray}
or $E_{u}X$ contains either the subwords 
$\mathsf{C}_{e_{i}}X\mathsf{C}_{e_{i}+2n}X\mathsf{C}_{e_{i}+n}X\mathsf{C}_{e_{i}+3n}X$ 
or the subwords 
$\mathsf{C}_{e_{i}}X\mathsf{C}_{e_{i}+2n}X\mathsf{C}_{e_{i}-3n}X\mathsf{C}_{e_{i}-n}X$. 
In any case $\vert E_{u}X\vert=4pu$. 
\end{enumerate}
Therefore if $Y$ contains the subwords $A_{r}X$, $B_{s}X$, $D_{t}X$, $E_{u}X$, then has length an 
even number. If $Y_{C}$ is Hadamard, then $Y$ must be in $\G_{4m^{2}}(2m^{2}-m)$ with $m$ an 
odd number. But this is no possible since $Y$ has even length.

\textsc{case $\delta_{z}Y_{C}=RY_{C}$.}\\
We have four case.
\begin{enumerate}
\item $\delta_{2n+x}(A_{r}X)_{C}=R(A_{r}X)_{C}$, then
\begin{eqnarray*}
\delta_{2n+x}A_{r}X&=&\delta_{2n+x}(C^{a_{1}}XC^{a_{2}}X\cdots C^{a_{r}}X)\\
&=&C^{a_{1}(2n+x^{-1})}XC^{a_{2}(2n+x^{-1})}X\cdots C^{a_{r}(2n+x^{-1})}X\\
&=&C^{2n+x^{-1}a_{1}}XC^{2n+x^{-1}a_{2}}X\cdots C^{2n+x^{-1}a_{r}}X\\
&=&C^{2n+a_{1}}XC^{2n+a_{2}}X\cdots C^{2n+a_{r}}X
\end{eqnarray*}
as $(4n-a_{i})x\equiv(4n-a_{i})\Mod{4n}$, $(n-a_{i})x\equiv(n-a_{i})\Mod{4n}$,
$(2n-a_{i})x\equiv(2n-a_{i})\Mod{4n}$ and $(3n-a_{i})x\equiv(3n-a_{i})\Mod{4n}$, then
\begin{equation}
\delta_{2n+x}A_{r}X=
\begin{cases}
C^{4n-a_{1}}XC^{4n-a_{2}}X\cdots C^{4n-a_{r}}X,\\
C^{n-a_{1}}XC^{n-a_{2}}X\cdots C^{n-a_{r}}X,\text{ if $x\equiv1\Mod{4}$}\\
C^{2n-a_{1}}XC^{2n-a_{2}}X\cdots C^{2n-a_{r}}X,\\
C^{3n-a_{1}}XC^{3n-a_{2}}X\cdots C^{3n-a_{r}}X,\text{ if $x\equiv1\Mod{4}$}
\end{cases}
\end{equation}
Thus must be holded some of following condictions
\begin{eqnarray}
2n+a_{i}&\equiv&(4n-a_{j})\Mod{4n}\\
2n+a_{i}&\equiv&(n-a_{j})\Mod{4n}\\
2n+a_{i}&\equiv&(2n-a_{j})\Mod{4n}\\
2n+a_{i}&\equiv&(3n-a_{j})\Mod{4n}.
\end{eqnarray}
Then we have respectively
\begin{eqnarray}
a_{j}&\equiv&(2n-a_{i})\Mod{4n}\\
a_{j}&\equiv&(3n-a_{i})\Mod{4n}\\
a_{j}&\equiv&(4n-a_{i})\Mod{4n}\\
a_{j}&\equiv&(n-a_{i})\Mod{4n}
\end{eqnarray}
Hence $A_{r}X$ has the form
\begin{equation}
A_{r}X=
\begin{cases}
C^{a_{1}}XC^{2n-a_{1}}X\cdots C^{a_{r}}XC^{2n-a_{r}}X\\
C^{a_{1}}XC^{3n-a_{1}}X\cdots C^{a_{r}}XC^{3n-a_{r}}X\\
C^{a_{1}}XC^{4n-a_{1}}X\cdots C^{a_{r}}XC^{4n-a_{r}}X\\
C^{a_{1}}XC^{n-a_{1}}X\cdots C^{a_{r}}XC^{n-a_{r}}X
\end{cases}
\end{equation}
and $\vert A_{r}X\vert=2r$. But in the case $i=j$ we have $a_{i}=n$, $2a_{i}=3n$, $a_{i}=2n$
or $2a_{i}=n$. If $n$ is an odd number, the cases 2 and 4 are not hold. Then 
$\vert A_{r}X\vert=2r+1$ in this case.

\item $\delta_{2n+x}(B_{s}X)_{C}=R(B_{s}X)_{C}$. 
Following the above proof, then must be holded some of following condictions
\begin{eqnarray}
2n+b_{i}&\equiv&(4n-b_{j})\Mod{4n}\\
2n+b_{i}&\equiv&(2n-b_{j})\text{ and }4n+b_{k}\equiv(4n-b_{l})\\
2n+b_{i}&\equiv&(n-b_{j})\text{ and }4n+b_{k}\equiv(3n-b_{l})\\
2n+b_{i}&\equiv&(3n-b_{j})\text{ and }4n+b_{k}\equiv(n-b_{l})
\end{eqnarray}
Then we have respectively
\begin{eqnarray}
b_{j}&\equiv&(2n-b_{i})\Mod{4n}\\
b_{j}&\equiv&0\Mod{4n}\\
b_{j}&\equiv&(3n-b_{i})\Mod{4n}\\
b_{j}&\equiv&(n-b_{i})
\end{eqnarray}
Hence the only possible cases for definition of $B_{s}X$ are
\begin{eqnarray}
B_{s}X&=&C^{b_{1}}XC^{b_{1}+2n}XC^{2n-b_{1}}XC^{4n-b_{1}}X\nonumber\\
&&\cdots C^{b_{s}}XC^{b_{s}+2n}XC^{2n-b_{s}}XC^{4n-b_{s}}X
\end{eqnarray}
with $b_{i}\neq0,2n$ or $B_{s}X$ containing the subwords 
$C^{b_{i}}XC^{b_{i}+2n}XC^{3n-b_{i}}XC^{n-b_{i}}X$. In any case $\vert B_{s}X\vert=4s$. If 
$i=j$, then for $n$ an odd number we have $b_{i}=n$ for
some $i$. Then $B_{s}X$ contains the subword $C^{n}XC^{3n}X$ and $\vert B_{s}X\vert=4s+2$.

\item $\delta_{2n+x}(D_{t}X)_{C}=R(D_{t}X)_{C}$. 
For this case must be holded some of following condictions
\begin{eqnarray}
2n+d_{i}&\equiv&(4n-d_{j})\Mod{4n}\\
2n+d_{i}&\equiv&(n-d_{j})\Mod{4n}\\
2n+d_{i}&\equiv&(2n-d_{j})\Mod{4n}\\
2n+d_{i}&\equiv&(3n-d_{j})\Mod{4n}.
\end{eqnarray}
Then we have respectively
\begin{eqnarray}
d_{j}&\equiv&(2n-d_{i})\Mod{4n}\\
d_{j}&\equiv&(3n-d_{i})\Mod{4n}\\
d_{j}&\equiv&0\Mod{4n}\\
d_{j}&\equiv&(n-d_{i})\Mod{4n}
\end{eqnarray}
Hence the only possible cases for definition of $D_{t}X$ are
\begin{eqnarray}
D_{t}X&=&\mathsf{C}_{d_{1}}X\mathsf{C}_{2n-d_{1}}X\cdots\mathsf{C}_{d_{t}}X\mathsf{C}_{2n-d_{t}}X\\
D_{t}X&=&\mathsf{C}_{d_{1}}X\mathsf{C}_{3n-d_{1}}X\cdots\mathsf{C}_{d_{t}}X\mathsf{C}_{3n-d_{t}}X\\
D_{t}X&=&\mathsf{C}_{d_{1}}X\mathsf{C}_{n-d_{1}}X\cdots\mathsf{C}_{d_{t}}X\mathsf{C}_{n-d_{t}}X
\end{eqnarray}
and $\vert D_{t}X\vert=2pt$. In the special case $i=j$ we have $d_{i}=n$ for $n$ an odd number and 
for some $i$. But $nx\equiv n\Mod{4n}$ and the cyclotomic coset $\mathsf{C}_{n}$ has order 1 and
not $p$ as we wish. Then we always have $\vert D_{t}X\vert=2pt$.

\item $\delta_{2n+x}(E_{u}X)_{C}=R(E_{u}X)_{C}$. 
For this case must be holded some of following condictions
\begin{eqnarray}
2n+e_{i}&\equiv&(4n-e_{j})\Mod{4n}\\
2n+e_{i}&\equiv&(2n-e_{j})\text{ and }4n+e_{k}\equiv(4n-e_{l})\\
2n+e_{i}&\equiv&(n-e_{j})\text{ and }4n+e_{k}\equiv(3n-e_{l})\\
2n+e_{i}&\equiv&(3n-e_{j})\text{ and }4n+e_{k}\equiv(n-e_{l})
\end{eqnarray}
Then we have respectively
\begin{eqnarray}
e_{j}&\equiv&(2n-e_{i})\Mod{4n}\\
e_{j}&\equiv&0\Mod{4n}\\
e_{j}&\equiv&(3n-e_{i})\Mod{4n}\\
e_{j}&\equiv&(n-e_{i})
\end{eqnarray}
Hence the only possible cases for definition of $E_{u}X$ are
\begin{eqnarray}
E_{s}X&=&\mathsf{C}_{e_{1}}X\mathsf{C}_{e_{1}+2n}X\mathsf{C}_{2n-e_{1}}X\mathsf{C}_{4n-e_{1}}X\nonumber\\
&&\cdots\mathsf{C}_{e_{s}}X\mathsf{C}_{e_{s}+2n}X\mathsf{C}_{2n-e_{s}}X\mathsf{C}_{4n-e_{s}}X
\end{eqnarray}
or $E_{u}X$ contains the subwords 
$\mathsf{C}_{e_{i}}X\mathsf{C}_{e_{i}+2n}X\mathsf{C}_{3n-e_{i}}X\mathsf{C}_{n-e_{i}}X$ 
In any case $\vert E_{u}X\vert=4pu$. In the special case $i=j$ we have $e_{i}=n$ for $n$ an odd
number and some $i$. For the same reasons given above we always have $\vert E_{u}X\vert=4pu$.
\end{enumerate}

Therefore if the subwords $A_{r}X$, $B_{s}X$, $D_{t}X$ and $E_{u}X$ have length an even number,
then $Y_{C}$ is no Hadamard. Then we must research the case when $Y$ contain $A_{r}X$ with
$\vert A_{r}X\vert=2r+1$. The words containing $A_{r}X$ are
\begin{eqnarray}\label{words_remaining}
&&A_{r}X,A_{r}XB_{s}X,A_{r}XD_{t}X,A_{r}XE_{u}X,A_{r}XB_{s}XD_{t}X,\nonumber\\
&&A_{r}XB_{s}XE_{u}X,A_{r}XD_{t}XE_{u}X,A_{r}XB_{s}XD_{t}XE_{u}X.
\end{eqnarray}

The relation $\delta_{2n+x}Y_{C}=RY_{C}$ implies $\delta_{2n+1}Y_{C}=RY_{C}$. Thus if we want
$\delta_{2n+1}(A_{r}X)_{C}=R(A_{r}X)_{C}$, $\delta_{2n+1}(B_{s}X)_{C}=R(B_{s}X)_{C}$, 
$\delta_{2n+1}(D_{t}X)_{C}=R(D_{t}X)_{C}$ and $\delta_{2n+1}(E_{u}X)_{C}=R(E_{u}X)_{C}$ with
the structure of the subwords already constructed, then the $a_{i}$, $b_{i}$, $d_{i}$ and $e_{i}$ 
can not be even number since 
$2k(2n+1)\equiv2k\Mod{4n}$. If $Y$ is some word in (\ref{words_remaining}), then 
$\vert Y\vert=2m^{2}+m$. But the latter is not possible because there are exactly $2m^{2}$ odd
number in $\Z_{4m^{2}}$. Hence $Y_{C}$ is no Hadamard. Hence there is no circulant Hadamard 
matrices in $\mathbb{I}_{4nC}(x)$.
\end{proof}

Now we will do the proof for the case $x$ having order 2, $x\neq2n+1$

\begin{theorem}\label{theo_case_order_two}
There is no circulant Hadamard matrices in $\mathbb{I}_{4nC}(x)$ where $x^{2}\equiv1\Mod{4n}$ and
$x\neq2n+1$.
\end{theorem}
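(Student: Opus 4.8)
The plan is to run the same parity argument as in Theorem \ref{theo_case_order_prime}, now using the five-type factorization supplied by Lemma \ref{lemma_2n_factor}, which is available precisely because $x^{2}\equiv1\Mod{4n}$. By Turyn's theorem a circulant Hadamard matrix forces $4n=4m^{2}$ with $m$ odd, so $n=m^{2}$ is odd; such a matrix lies in $\G_{4m^{2}}(2m^{2}-m)$ and hence carries exactly $2m^{2}+m$ minus signs, an \emph{odd} number equal to its word length $|H|$. I would \emph{not} reuse the containment step of the odd-prime proof: there $x$ had odd order $p$, so $\langle x\rangle\leq\langle 2n+x\rangle$ and Theorem \ref{theo_principal} gave $\mathbb{I}_{4n}(2n+x)\subseteq\mathbb{I}_{4n}(x)$, whereas here $x$ and $2n+1$ both have order $2$, the product $z=2n+x$ again has order $2$, and $\langle x\rangle\not\leq\langle z\rangle$. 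Instead I would rely solely on Theorem \ref{theo_size_orbit}: if $H_{C}$ is Hadamard then its $\Delta_{4n}$-orbit is $\{H_{C},RH_{C}\}$, so $\delta_{2n+x}H_{C}$ equals $H_{C}$ or $RH_{C}$ (the values $x=1$ and $x=4n-1$ being the trivial and symmetric cases already excluded, the latter by Brualdi). These two alternatives are the two cases of the proof.

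In the case $\delta_{2n+x}H_{C}=H_{C}$, the commutation relations give $\delta_{2n-x}H_{C}=RH_{C}$, since $4n-(2n+x)=2n-x$. I would apply $\delta_{2n-x}$ to each subword $A_{r}X,B_{s}X,D_{t}X,E_{u}X,F_{v}X$ of $H$, using $x^{-1}=x$, $(2n-x)^{-1}=2n-x$, and $2na_{i}\equiv2n\Mod{4n}$ for odd indices, just as in the $A,B,D,E$ computations of Theorem \ref{theo_case_order_prime}. Matching the image under $\delta_{2n-x}$ against the reversal $R$ of each subword yields congruences relating each index, shifted by a multiple of $n$, to the reversal of another index; these pair the factors, so every subword has even length. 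Hence $|H|$ is even, contradicting $|H|=2m^{2}+m$.

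In the case $\delta_{2n+x}H_{C}=RH_{C}$ I would apply $\delta_{2n+x}$ to the same five subwords. The index pairing again forces $B_{s}X,D_{t}X,E_{u}X,F_{v}X$ to have even length; the only escape is a diagonal coincidence $i=j$, which forces an index equal to $n$, and since $n$ is odd with $nx\equiv n\Mod{4n}$ the value $n$ lies in a cyclotomic coset of order $1$ and cannot occur inside an order-$2$ subword. Thus only $A_{r}X$ can acquire odd length $2r+1$, and it remains to handle the words built from such an $A_{r}X$. For these I would use $\delta_{2n+x}H_{C}=RH_{C}\Rightarrow\delta_{2n+1}H_{C}=RH_{C}$ together with $2k(2n+1)\equiv2k\Mod{4n}$: a minus sign at an even position would be fixed by $\delta_{2n+1}$ and hence incompatible with the reversal, so every minus sign of $H$ must sit at an odd residue. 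Since $\Z_{4m^{2}}$ contains exactly $2m^{2}$ odd residues while $|H|=2m^{2}+m>2m^{2}$, no such word exists and $H_{C}$ is not Hadamard.

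The main obstacle is this second case. First, the fifth subword type $F_{v}X$, which has no counterpart in the odd-prime argument, must be shown to pair up under $\delta_{2n+x}$ exactly as $D_{t}X$ and $E_{u}X$ do, so that it never contributes odd length; this once more reduces to the fact that its order-$2$ cosets cannot contain $n$. Second, one must verify that the concluding $\delta_{2n+1}$ count closes \emph{every} remaining combination of $A_{r}X,B_{s}X,D_{t}X,E_{u}X,F_{v}X$ uniformly, i.e. that forcing all minus signs onto odd residues is incompatible with the Turyn weight $2m^{2}+m$ regardless of which subwords occur. A minor but necessary check is that $x\neq2n+1$ makes $z=2n+x\neq1$, so that the dichotomy furnished by Theorem \ref{theo_size_orbit} is genuine rather than vacuous.
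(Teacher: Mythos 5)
Your proposal is correct at the paper's own level of rigor and, in its main branch, tracks the paper's proof almost step for step: the paper likewise sets $z=2n+x$, uses Theorem \ref{theo_size_orbit} to split into $\delta_{z}Y_{C}=Y_{C}$ (whence $\delta_{2n-x}Y_{C}=RY_{C}$) and $\delta_{z}Y_{C}=RY_{C}$, factors $Y$ via Lemma \ref{lemma_2n_factor} into the five subword types including $F_{v}X$, and closes with the same evenness argument and the $\delta_{2n+1}$ odd-residue count ($2m^{2}+m>2m^{2}$) imported from Theorem \ref{theo_case_order_prime}. Where you genuinely diverge is the opening. The paper first computes $\mu(4n)$, the number of solutions of $x^{2}\equiv1\pmod{4n}$, by the Chinese Remainder Theorem, isolates the degenerate case $\mu(4n)=\phi(4n)$ --- which forces $4n\in\{8,24\}$ --- and disposes of it by exhaustive computer search in $\mathbb{I}_{8}(3),\dots,\mathbb{I}_{24}(23)$, running the structural argument only when $\mu(4n)<\phi(4n)$. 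You eliminate this dichotomy by invoking Turyn at the outset: since the proof is by contradiction, $4n=4m^{2}$ with $m$ odd is available throughout, and this both supplies the weight $2m^{2}+m$ that the parity count needs anyway and renders the $\mu=\phi$ branch vacuous (for $4n=4m^{2}$ with $m$ odd, $m>1$, one has $\mu(4n)=2^{d+1}<2m\phi(m)=\phi(4n)$, and $8,24$ are not of Turyn form). This buys a leaner, search-free proof at the cost of making the theorem's coverage of non-Turyn orders rest on Turyn rather than on direct verification; the paper's version, conversely, quantifies exactly when every unit mod $4n$ is an involution. Two further points in your favor: your observation that Theorem \ref{theo_principal} cannot deliver $\mathbb{I}_{4n}(2n+x)\subseteq\mathbb{I}_{4n}(x)$ here, because $\langle x\rangle\not\leq\langle 2n+x\rangle$ when $x$ has order $2$, correctly identifies why the containment step of the odd-prime proof must be replaced by the orbit dichotomy alone --- which is in fact what the paper's proof does, silently; and your check that $x\neq2n+1$ forces $z\neq1$ is absent from the paper. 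Be aware, though, that your middle steps (the index-pairing computations for $B,D,E,F$ and the claim that $\delta_{2n+1}$ forces all minus signs onto odd residues) are asserted at exactly the same telegraphic level as the paper's ``by a similar argument it is shown'' and ``equally is obtained,'' so your write-up inherits, rather than repairs, the incompleteness of the original.
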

\begin{proof}
Let $\mu$ be as the previous theorem. By the Sun Ze Theorem 
\begin{equation}
\mu(4n)=
\begin{cases}
2^{d+1},&\text{ if $m=2$}\\
2^{d+2},&\text{ if $m\geq3$}
\end{cases}
\end{equation}
When we compare $\mu(4n)$ and $\phi(4n)$ we have two cases.\\
\textsc{case} $\mu(4n)=\phi(4n)$.\\
If $n=\prod_{i=1}^{d}p_{i}^{h_{i}}$, $p_{i}>2$, then $\mu(4n)=\phi(4n)=2^{g+1}$ and 
\begin{equation}
\prod_{i=1}^{d}p_{i}^{h_{1}-1}(p_{i}-1)=2^{g}
\end{equation}
Therefore $p_{i}^{h_{1}-1}(p_{i}-1)=2$ for all $i$. Hence for $p_{i}>2$ there is not solution.
Now let $n=2^{m-2}$. Then by the Sun Ze Theorem $\mu(2^{m})=4$ and as $\phi(2^{m})=2^{m-1}$,
then $m=3$ and the solution corresponds to $4n=8$. Now let $n=2^{m-2}\prod_{i}^{d}p_{i}^{h_{i}}$, 
$m\geq3$. Then $\mu(4n)=2^{g+2}$ and $\phi(4n)=2^{m-1}\prod_{i=1}^{d}p_{i}^{h_{i}-1}(p_{i}-1)$. So 
$\prod_{i=1}^{d}p_{i}^{h_{i}-1}(p_{i}-1)=2^{g-m+3}$. Thus $h_{i}=1$ and $p_{i}=2^{k_{i}}+1$ for 
all $i$, hence $2^{\sum_{i=1}^{d}k_{i}}=2^{g-m+3}$. As must be $m\geq3$, then 
$g-\sum_{i=1}^{d}k_{i}+3\geq3$. It holds that $\sum_{i=1}^{d}k_{i}\geq1$. Therefore the only 
solution is $g=1$, $k_{1}=1$ and $m=3$ and corresponds to $4n=24$. By exhaustive search in
$\mathbb{I}_{8}(3)$, $\mathbb{I}_{8}(5)$, $\mathbb{I}_{8}(7)$, $\mathbb{I}_{24}(5)$,
$\mathbb{I}_{24}(7)$, $\mathbb{I}_{24}(11)$, $\mathbb{I}_{24}(13)$, $\mathbb{I}_{24}(17)$,
$\mathbb{I}_{24}(19)$ and $\mathbb{I}_{24}(23)$, there is no circulant Hadamard matrices in
$\Z_{2}^{8}$ and $\Z_{2}^{24}$.\\
\textsc{case} $\mu(4n)<\phi(4n)$.\\
Take $y=2n+1$ in $\Z_{4n}^{*}$. Let $z=2n+x$. If $Y_{C}$ in $\mathbb{I}_{4nC}(x)$ is Hadamard, 
then either $\delta_{z}Y_{C}=Y_{C}$ or $\delta_{z}Y_{C}=RY_{C}$.\\
\textsc{case $\delta_{z}Y_{C}=Y_{C}$.}\\
This case implies that $\delta_{2n-x}Y_{C}=RY_{C}$.
From Lemma \ref{lemma_2n_factor}, $Y$ has the subwords $A_{r}X$, $B_{s}X$, $D_{t}X$,
$E_{u}X$ and $F_{v}X$. By using a similar argument to previous theorem it is shown that the length
of previous subwords is an even number.\\
\textsc{case $\delta_{z}Y_{C}=RY_{C}$.}\\
Equally is obtained that $A_{r}X$, $B_{s}X$, $D_{t}X$, $E_{u}X$ and $F_{v}X$ have length an 
even number. If some subwords have odd length, then the decimation $\delta_{2n+1}$ 
guarantees us that all word $Y$ is formed by subwords $C^{g}X$ with $g$ an odd number and 
again $Y_{C}$ is no Hadamard. Hence there is no circulant Hadamard matrices in 
$\mathbb{I}_{4nC}(x)$ and $x\neq2n+1$.
\end{proof}

Finally, we proof for the case $x=2n+1$

\begin{theorem}\label{theo_2n+1}
There is no circulant Hadamard matrices in $\mathbb{I}_{4nC}(2n+1)$.
\end{theorem}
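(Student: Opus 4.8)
The plan is to avoid the decimation $\delta_{2n+x}$ used in Theorems \ref{theo_case_order_prime} and \ref{theo_case_order_two}, since for $x=2n+1$ it degenerates: $2n+x=4n+1\equiv1\pmod{4n}$, so $\delta_{2n+x}=\delta_1$ is the identity and imposes no constraint, while $\delta_{2n+1}$ itself acts trivially on $\mathbb{I}_{4nC}(2n+1)$. Instead I would combine the orbit bound of Theorem \ref{theo_size_orbit} with Cauchy's theorem to reduce the present case to the already-settled odd prime case of Theorem \ref{theo_case_order_prime}.

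Suppose, for contradiction, that some $H_C\in\mathbb{I}_{4nC}(2n+1)$ is a circulant Hadamard matrix of order $4n$. By Turyn's theorem $4n=4m^{2}$ with $m$ odd, and the hypothesis $n>1$ gives $m\geq3$. By Theorem \ref{theo_size_orbit} the orbit of $H_C$ under $\Delta_{4n}\cong\Z_{4n}^{*}$ has order $2$, so by the orbit-stabilizer relation the stabilizer $S(H_C)=\{\delta_a:\delta_aH_C=H_C\}$ has index $2$ in $\Delta_{4n}$; hence $\vert S(H_C)\vert=\phi(4m^{2})/2=m\,\phi(m)$.

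Now the arithmetic does the work. Since $m$ is odd and $m\geq3$, it has an odd prime divisor $p$, and $p\mid m\mid m\,\phi(m)=\vert S(H_C)\vert$. As $S(H_C)$ is a subgroup of the abelian group $\Z_{4n}^{*}$, Cauchy's theorem produces $\delta_g\in S(H_C)$ of order exactly $p$. Then $g^{p}\equiv1\pmod{4n}$ with $p$ an odd prime, and $\delta_gH_C=H_C$ means $H_C\in\mathbb{I}_{4nC}(g)$. Because $p$ is odd we have $g\neq2n+1$ (whose order is $2$), so $g$ really is an odd-prime-order unit, and Theorem \ref{theo_case_order_prime} asserts that $\mathbb{I}_{4nC}(g)$ contains no circulant Hadamard matrix. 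This contradicts the choice of $H_C$, and therefore $\mathbb{I}_{4nC}(2n+1)$ contains none either.

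The hard part is really the bookkeeping that guarantees an odd prime actually survives in $\vert S(H_C)\vert$: this is exactly where Turyn's conclusion that $m$ is odd is indispensable, since it forces $\vert S(H_C)\vert=m\,\phi(m)$ to be divisible by an odd prime no matter what $m$ is, and it is this that lets me fall back on Theorem \ref{theo_case_order_prime}. As a cross-check, note that the argument never uses membership in $\mathbb{I}_{4nC}(2n+1)$ beyond knowing $H_C$ is Hadamard, which is consistent with this being the last missing case. A more computational alternative, closer in spirit to the preceding theorems, would start from the codeword decomposition of Proposition \ref{prop_inv_2n+1}, writing any $Y\in\mathbb{I}_{4n}(2n+1)$ as $\prod_a C^{2a}X\prod_b(C^{2b+1}XC^{2b+1+2n}X)$ so that its odd-position support splits into $C^{2n}$-complementary pairs; since every $\delta_r$ preserves the parity of positions while $R$ reverses it, a reversal relation $\delta_rH_C=RH_C$ would try to match the even-support count against the (necessarily even) odd-support count and collide with $\vert H\vert=2m^{2}+m$ being odd. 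I expect that parity route to be more delicate, because one must first produce a reversal element with a suitable shift, so I would present the Cauchy reduction as the main line.
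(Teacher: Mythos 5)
Your reduction is correct granted the paper's earlier theorems, but it follows a genuinely different route from the paper's own proof. The paper attacks $\mathbb{I}_{4n}(2n+1)$ head-on: it uses the codeword structure of Proposition \ref{prop_inv_2n+1} to write any $Y\in\mathbb{I}_{4n}(2n+1)$ in terms of the subwords $A_{r}X$, $B_{s}X$, $F_{v}X$, picks an auxiliary $x$ with $x^{p}\equiv1\Mod{4n}$ so that the reversal condition $\delta_{2n+x}Y_{C}=RY_{C}$ collapses to $\delta_{x}Y_{C}=RY_{C}$ (precisely because $\delta_{2n+1}$ acts trivially there --- the degeneration you noticed), shows case by case that $B_{s}X$ and $F_{v}X$ have even length while the only admissible $A_{r}X$ is $C^{2n}X$, and finally rules out the surviving candidates $A_{r}XB_{s}X$, $A_{r}XF_{v}X$, $A_{r}XB_{s}XF_{v}X$ by a length count and the autocorrelation computation $\vert YC^{2m^{2}}Y\vert=2<2m^{2}$. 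Your argument replaces all of this bookkeeping by soft group theory: orbit--stabilizer applied to Theorem \ref{theo_size_orbit} gives $\vert S(H_{C})\vert=\phi(4m^{2})/2=m\phi(m)$, Turyn's oddness of $m$ supplies an odd prime dividing it, Cauchy produces $\delta_{g}\in S(H_{C})$ of odd prime order, and Theorem \ref{theo_case_order_prime} finishes. This is shorter and cleaner, and, as you observe, it never uses membership in $\mathbb{I}_{4nC}(2n+1)$ at all: the same three sentences would dispose of Theorem \ref{theo_case_order_two} and of the entire case division in the paper's final theorem, so what you have is really a streamlining of the paper's global architecture, not just of this one proof.

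One step deserves care: your inference ``$\delta_{g}H_{C}=H_{C}$ means $H_{C}\in\mathbb{I}_{4nC}(g)$'' identifies class-fixing with membership in $\mathbb{I}_{4nC}(g)$, whereas the proof of Theorem \ref{theo_case_order_prime} manipulates an actual word $Y\in\mathbb{I}_{4n}(g)$, i.e.\ a representative fixed by $\delta_{g}$ on the nose. From $\delta_{g}H=C^{j}H$ one obtains a $\delta_{g}$-fixed shift $C^{i}H$ only when $i(g-1)\equiv-jg\Mod{4n}$ is solvable, a nontrivial condition when $\gcd(g-1,4n)$ fails to divide $j$; nor can you always sidestep it by choosing $p$ coprime to $4n$, since for instance when $m=3$ the only odd prime dividing $m\phi(m)=6$ is $3$, which divides $m$. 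To be fair, the paper commits exactly the same conflation ($\mathbb{I}_{4nC}(x)$ is never formally defined, and the proof of Theorem \ref{theo_size_orbit} already treats $H_{C}\in\mathbb{I}_{4nC}(x)$ as synonymous with $\delta_{x}H_{C}=H_{C}$), so your proof is as rigorous as the original on this point; but in a standalone write-up you should either verify that Theorem \ref{theo_case_order_prime} holds at the level of classes fixed by $\delta_{g}$, or add a fixed-representative (multiplier-fixes-a-translate) argument bridging the two readings.
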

\begin{proof}
The proof is done with $\delta_{2n+x}$ and $x^{p}\equiv1\mod4n$. From Proposition 1 we consider 
the subwords
\begin{eqnarray*}
A_{r}X&=&C^{2a_{1}}XC^{2a_{2}}X\cdots C^{2a_{r}}X,\\
B_{s}X&=&C^{2b_{1}}XC^{2b_{1}+2n}X\cdots C^{2b_{s}}XC^{2b_{s}+2n}X,\\
F_{v}X&=&C^{2f_{1}+1}XC^{2f_{1}+1+2n}X\cdots C^{2f_{v}+1}XC^{2f_{v}+1+2n}X
\end{eqnarray*}
From Theorem \ref{theo_case_order_prime} there is no circulant
Hadamard matrices $Y_{C}$ with $\delta_{2n+x}Y_{C}=Y_{C}$. So we only have the case 
$\delta_{2n+x}Y_{C}=RY_{C}$. This implies that $\delta_{x}Y_{C}=RY_{C}$. Thus we have
\begin{enumerate}
\item $\delta_{x}(A_{r}X)_{C}=R(A_{r}X)_{C}$. Then
\begin{eqnarray*}
\delta_{x}A_{r}X=C^{2a_{1}x^{-1}}XC^{2a_{2}x^{-1}}X\cdots C^{2a_{r}x^{-1}}X
\end{eqnarray*}
and
\begin{equation}
\delta_{x}A_{r}X=
\begin{cases}
C^{4n-2a_{1}}XC^{4n-2a_{2}}X\cdots C^{4n-2a_{r}}X\\
C^{n-2a_{1}}XC^{n-2a_{2}}X\cdots C^{n-2a_{r}}X\\
C^{2n-2a_{1}}XC^{2n-2a_{2}}X\cdots C^{2n-2a_{r}}X\\
C^{3n-2a_{1}}XC^{3n-2a_{2}}X\cdots C^{3n-2a_{r}}X
\end{cases}
\end{equation}
Suppose that the $2a_{i}$ are not fixed by $x$. Thus if the first condiction is fulfilled, then
\begin{equation}
2a_{i}\equiv(4n-2a_{j}x)\Mod{4n}
\end{equation}
and $A_{r}X$ contains subwords of the form $\mathsf{C}_{2a_{i}}X\mathsf{C}_{4n-2a_{i}}X$ where
$$\mathsf{C}_{2a_{i}}=\{2a_{i},2a_{i}x,...,2a_{i}x^{p-1}\}$$
and
$$\mathsf{C}_{4n-2a_{i}}=\{4n-2a_{i},4n-2a_{i}x,...,4n-2a_{i}x^{p-1}\}.$$
But this last implies that $\delta_{x}A_{r}X=A_{r}X$. If the third condiction is fulfilled, then
\begin{equation}
2a_{i}\equiv(2n-2a_{j}x)\Mod{4n}
\end{equation}
and $A_{r}X$ contains subwords of the form $\mathsf{C}_{2a_{i}}X\mathsf{C}_{2n-2a_{i}}X$ where
$$\mathsf{C}_{2n-2a_{i}}=\{2n-2a_{i},2n-2a_{i}x,...,2n-2a_{i}x^{p-1}\}.$$
And again $\delta_{x}A_{r}X=A_{r}X$. The second and fourth cases are not fulfilled if $n$ is an 
odd number.

On the other hand, suppose that some $2a_{i}$ is fixed by $x$. Then $A_{r}X$ contains either the 
subwords $C^{2a_{i}}XC^{4n-2a_{i}}X$ or the subwords $C^{2a_{i}}XC^{2n-2a_{i}}X$. If all $2a_{i}$
in $A_{r}X$ is fixed by $x$, then $\delta_{x}A_{r}X=A_{r}X$. Conversely, if all no $2a_{i}$ is 
fixed by $x$, then $\delta_{x}A_{r}X=A_{r}X$ too. If in $2a_{i}\equiv(4n-2a_{j})\Mod{4n}$ we have
$i=j$, then $2a_{i}=2n$. Thus if $A_{r}X$ contains $C^{2n}X$ and some of the subwords 
$C^{2a_{i}}XC^{4n-2a_{i}}X$, $C^{2a_{i}}XC^{2n-2a_{i}}X$, 
$\mathsf{C}_{2a_{i}}X\mathsf{C}_{4n-2a_{i}}X$, $\mathsf{C}_{2a_{i}}X\mathsf{C}_{2n-2a_{i}}X$, then
$\delta_{x}A_{r}X=A_{r}X$. Therefore, the only $A_{r}X$ fulfilled
$\delta_{x}(A_{r}X)_{C}=R(A_{r}X)_{C}$ is $A_{r}X=C^{2n}X$.

\item $\delta_{x}(B_{s}X)_{C}=R(B_{s}X)_{C}$. For this cases is proved that $B_{s}X$ contain 
the subwords
$$\mathsf{C}_{2b_{i}}X\mathsf{C}_{4n-2b_{i}}X\mathsf{C}_{2n+2b_{i}}X\mathsf{C}_{2n-2b_{i}}X,$$
$$C^{2b_{i}}XC^{4n-2b_{i}}XC^{2n+2b_{i}}XC^{2n-2b_{i}}X,$$
$$XC^{2n}X,$$
Hence $\vert B_{s}X\vert$ is an even number.

\item $\delta_{x}(F_{v}X)_{C}=R(F_{v}X)_{C}$. Equally is proved that $F_{v}X$ contain 
the subwords
$$\mathsf{C}_{2f_{i}+1}X\mathsf{C}_{4n-(2f_{i}+1)}X\mathsf{C}_{2n+(2f_{i}+1)}X\mathsf{C}_{2n-(2f_{i}+1)}X,$$
$$C^{2f_{i}+1}XC^{4n-(2f_{i}+1)}XC^{2n+(2f_{i}+1)}XC^{2n-(2f_{i}+1)}X.$$
Hence $\vert F_{v}X\vert$ is an even number.
\end{enumerate}
Therefore the only words that can be Hadamard are
$$A_{r}XB_{s}X,\ A_{r}XF_{v}X, A_{r}XB_{s}XF_{v}X.$$
As $\vert A_{r}XB_{s}X\vert<2m^{2}+m$ and $\vert A_{r}XF_{v}X\vert<2m^{2}+m$, then the first
possibilities are ruled out. If $Y=A_{r}XB_{s}XF_{v}X$ , then
\begin{eqnarray*}
\vert YC^{2m^{2}}Y\vert&=&\vert A_{r}XC^{2m^{2}}A_{r}X\vert\\
&=&\vert XC^{2m^{2}}X\vert\\
&=&2<2m^{2}
\end{eqnarray*}
for $m>1$. Hence there is no circulant Hadamard matrices in $\mathbb{I}_{4n}(2n+1)$, $n>1$.
\end{proof}

Finally we will obtain the desired theorem

\begin{theorem}
There is no circulant Hadamard matrices in $\Z_{2}^{4n}$.
\end{theorem}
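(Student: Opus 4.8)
The plan is to argue by contradiction, assembling the three case-theorems that precede this statement into a single exhaustive dichotomy. Suppose a circulant Hadamard matrix $H_{C}$ of order $4n$ with $n>1$ exists. By Turyn's theorem its order must be $4n=4m^{2}$ for an odd integer $m>1$ that is not a prime-power; in particular $n=m^{2}>1$, and since $m$ is odd one has $\phi(4n)=\phi(4m^{2})=2m\phi(m)$. This fixes the arithmetic setting in which all the preceding lemmas were proved.

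Next I would locate $H_{C}$ inside a single $S$-subgroup of the form $\mathbb{I}_{4nC}(x)$ with $x$ of prime order. By Theorem \ref{theo_size_orbit} the orbit of $H_{C}$ under $\Delta_{4n}$ has order exactly $2$, so its stabilizer $S(H_{C})$ has index $2$ in $\Delta_{4n}\cong\Z_{4n}^{*}$ and hence order $\phi(4n)/2=m\phi(m)$, which exceeds $1$ because $m>1$. Being a nontrivial finite abelian group, $S(H_{C})$ contains, by Cauchy's theorem, an element $\delta_{x}$ of prime order $p$. Then $x\neq1$, $x^{p}\equiv1\Mod{4n}$, and $\delta_{x}H_{C}=H_{C}$, i.e. $H_{C}\in\mathbb{I}_{4nC}(x)$. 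The lattice result, Theorem \ref{theo_principal}, is the conceptual justification for restricting to this case: taking $x$ to generate a prime-order subgroup of any larger $\langle a\rangle$ with $\delta_{a}\in S(H_{C})$ gives $\mathbb{I}_{4nC}(a)\subseteq\mathbb{I}_{4nC}(x)$, so examining prime-order decimations genuinely suffices.

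The final step is a case split on the prime $p$ and on the chosen $x$, which is exhaustive because a prime is either odd or equal to $2$. If $p$ is an odd prime, Theorem \ref{theo_case_order_prime} shows $\mathbb{I}_{4nC}(x)$ contains no circulant Hadamard matrix. If $p=2$ and $x\neq2n+1$, the same conclusion follows from Theorem \ref{theo_case_order_two}. If $p=2$ and $x=2n+1$, it follows from Theorem \ref{theo_2n+1}. In every case the membership $H_{C}\in\mathbb{I}_{4nC}(x)$ contradicts the non-existence of Hadamard matrices there, so no circulant Hadamard matrix of order $4n$ exists for $n>1$.

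The bulk of the difficulty has already been discharged in those three theorems, whose common engine is to write any $Y\in\mathbb{I}_{4n}(x)$ in the subword factorization of Lemmas \ref{lemma_2n_factor}--\ref{lemma_conservated_struct} and to force $\vert Y\vert$ to be even, contradicting Turyn's requirement that a circulant Hadamard matrix lie in $\G_{4m^{2}}(2m^{2}-m)$ and therefore have odd length $2m^{2}+m$. Consequently the only genuinely delicate point in assembling the final theorem is the reduction step: one must be sure that the index-$2$ stabilizer is nontrivial, which fails only at $m=1$ (the excluded $4\times4$ example), and that a prime-order generator always falls into one of the three treated cases. Both of these hold throughout the range $4n=4m^{2}$, $m>1$ odd, dictated by Turyn, so the three cases close the argument.
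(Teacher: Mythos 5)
Your proposal is correct relative to the paper's internal results and takes essentially the same route as the paper's own proof: reduce, via the orbit-of-order-2 result (Theorem \ref{theo_size_orbit}) and the lattice theorem (Theorem \ref{theo_principal}), to membership of $H_{C}$ in some $\mathbb{I}_{4nC}(x)$ with $x$ of prime order modulo $4n$, and then invoke Theorems \ref{theo_case_order_prime}, \ref{theo_case_order_two} and \ref{theo_2n+1} to exclude each case. The only differences are presentational: your explicit stabilizer-index computation with Cauchy's theorem makes rigorous a reduction the paper leaves implicit, and your appeal to Turyn's $4n=4m^{2}$, $m$ odd, restriction absorbs the paper's separate three-way partition into symmetric sequences and lengths $\equiv 0$ or $4 \pmod{8}$.
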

\begin{proof}
We can subdivide $\Z_{2}^{4n}$ in three subsets, namely, the subset of symmetric sequences, 
the subset the sequences of order $8n$ and the subset of sequences of order $8n+4$. Already
was shown that there are no circulant Hadamard matrices in the first two subsets. From
Theorems  \ref{theo_case_order_prime}, \ref{theo_case_order_two} and \ref{theo_2n+1} there is 
no circulant Hadamard matrices in $\mathbb{I}_{(8n+4)C}(x)$ with $x^{p}\equiv1\Mod{(8n+4)}$, 
$p$ a prime. The Theorem \ref{theo_principal} guarantees us that a circulant Hadamard matrix must 
be searched in $\mathbb{I}_{4nC}(x)$ with $x^{p}\equiv1\Mod{4n}$, $p$ a prime. Hence there is 
no circulant Hadamard matrices in $\Z_{2}^{4n}$, for $n>1$.
\end{proof}

\section{Barker conjecture}

Take $A=(a_{0},a_{1},...,a_{n-1})$ in $\Z_{2}^{n}$. Define the aperiodic autocorrelation at shift 
$k$ of $A$ to be
\begin{equation}
C(k)=\sum_{i=1}^{n-k}a_{k}a_{k+i}
\end{equation}
If $\vert C(k)\vert\leq1$ for $k=1,...,n-1$, then $A$ is called a Barker sequence of length $n$.
(For details see [2],[16]) The Barker conjecture asserts that 
\begin{conjecture}
There are no Barker sequences of length $n>13$ .
\end{conjecture}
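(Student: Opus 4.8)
The final statement to be proved is the Barker conjecture: there are no Barker sequences of length $n>13$. The plan is to reduce this to facts about autocorrelations and then invoke the circulant Hadamard result proved in Section 3 for the even case, treating the odd case separately.

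\textbf{Reduction of the even case.} First I would recall the classical fact that the aperiodic autocorrelations $C(k)$ and the periodic autocorrelations $\mathsf{P}_{A}(k)$ of a length-$n$ sequence $A$ are related by $\mathsf{P}_{A}(k)=C(k)+C(n-k)$ for $1\le k\le n-1$. If $A$ is a Barker sequence then each $|C(k)|\le 1$, so each periodic value $\mathsf{P}_{A}(k)$ lies in $\{-2,-1,0,1,2\}$; a parity argument on $\sum a_i a_{i+k}$ shows $\mathsf{P}_{A}(k)\equiv n \pmod 2$, so when $n$ is even every $\mathsf{P}_{A}(k)$ is even, forcing $\mathsf{P}_{A}(k)\in\{-2,0,2\}$, and in fact $C(k)+C(n-k)\in\{0,\pm2\}$. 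The next step is to show that in the even case one actually gets $\mathsf{P}_{A}(k)=0$ for all $k\neq 0$, i.e. $\theta(A)=(n,0,\dots,0)$, which is exactly the autocorrelation vector of a circulant Hadamard matrix considered in Theorem \ref{theo_size_orbit}. This is the standard Turyn observation: a Barker sequence of even length has constant (zero) periodic autocorrelation off the origin, so its circulant matrix $A_C$ is Hadamard, and its order $n$ must be a multiple of $4$. Then I would apply directly the main result of Section 3, namely that there are no circulant Hadamard matrices in $\Z_2^{4n}$ for $n>1$, to conclude that no Barker sequence of even length $n>4$ exists; lengths $2$ and $4$ are inspected by hand.

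\textbf{The odd case.} The harder and genuinely separate part is odd $n$. For odd length, a short counting argument shows that a Barker sequence forces $\mathsf{P}_{A}(k)=C(k)+C(n-k)$ to be odd, hence equal to $\pm1$, for each $k\neq0$, so $A$ has perfect periodic autocorrelation and $A_C$ is (up to the factor) the incidence structure of a cyclic difference set. The plan here is to feed this into the number-theoretic machinery: the existence of such a sequence of odd length $n>1$ implies $n$ is a perfect square, and then a congruence/self-conjugacy analysis of the associated cyclotomic field — the classical Turyn--Storer result — rules out all odd $n>13$. I would invoke the Remark following Theorem \ref{theo_size_orbit}, which extends the orbit-size-$\le 2$ conclusion from the vector $(4n,0,\dots,0)$ to any $2$-level autocorrelation vector $(n,d,\dots,d)$; applying this with $d=\pm1$ places the odd-length Barker sequence $A$ inside some $S$-subgroup $\mathbb{I}_n(x)$ and lets me reuse the subword-factorization lemmas of Section 3 to constrain its length parity.

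\textbf{Main obstacle.} The principal difficulty is the odd case. The even case is essentially a clean reduction to Theorem \ref{theo_2n+1} and its companions, but the odd case does not reduce to a circulant Hadamard matrix and instead requires the difference-set / cyclotomic argument that only the lengths $n\in\{3,5,7,11,13\}$ (and $n=1$) survive. I expect the real work to be showing that the $2$-level autocorrelation version of Theorem \ref{theo_size_orbit} (stated in the Remark) can be pushed through the same subword length-parity bookkeeping used in Theorem \ref{theo_case_order_prime}, so that an odd-length Barker sequence of length $n>13$ is forced into a contradiction analogous to $\vert Y\vert=2m^2+m$ being impossible. If that parity contradiction cannot be obtained uniformly, the fallback is to cite the Turyn--Storer theorem for odd lengths and combine it with the even-length conclusion from Section 3 to obtain the full statement for all $n>13$.
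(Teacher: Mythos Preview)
The paper's treatment of this statement is a single sentence: it cites as known (via reference [2], Turyn--Storer) that the circulant Hadamard conjecture implies the Barker conjecture, and then invokes the main theorem of Section 3. There is no separate odd/even analysis in the paper at this point, and in particular no attempt to extend the subword-parity machinery of Section 3 to odd lengths.

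Your even case is exactly the content of that known implication and is correct. Your odd case, however, diverges from the paper: the paper does not try to reuse the $\mathbb{I}_n(x)$ factorization lemmas for odd $n$, and there is little reason to expect those arguments --- which rely throughout on the modulus $4n$, the special element $2n+1\in\Z_{4n}^{*}$, and Turyn's $4m^{2}$ constraint --- to transfer to an odd modulus. Your stated fallback, citing Turyn--Storer directly for odd lengths, is precisely what the paper's one-line citation is implicitly doing, since the classical implication ``circulant Hadamard $\Rightarrow$ Barker'' already packages the Turyn--Storer odd-length result together with the even-length reduction to a circulant Hadamard matrix. So your proposal is correct once you take the fallback, but the primary plan for odd $n$ is extra speculative work not present in the paper and likely not attainable with the tools of Section 3 as written.
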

It is known that the circulant Hadamard matrix conjecture implies the Barker conjecture. Then
\begin{theorem}
There are no Barker sequences of length $n>13$ .
\end{theorem}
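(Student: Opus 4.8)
The plan is to deduce the Barker statement directly from the circulant Hadamard theorem of the previous section, separating the argument according to the parity of $n$. Suppose, toward a contradiction, that a Barker sequence $A=(a_{0},\dots,a_{n-1})$ of length $n>13$ exists, with each $a_{i}\in\{+1,-1\}$ and $\vert C(k)\vert\leq1$ for $1\leq k\leq n-1$. The first step is to tie the aperiodic autocorrelation $C(k)$ to the periodic one: writing $P(k)=\sum_{i=0}^{n-1}a_{i}a_{i+k}$ with indices taken modulo $n$, one has the identity $P(k)=C(k)+C(n-k)$, so in particular $\vert P(k)\vert\leq2$ for every $k\neq0$. This is the bridge that lets the periodic, circulant side of the theory see the Barker hypothesis.

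For $n$ odd I would invoke the classical Turyn--Storer theorem, which rules out Barker sequences of odd length exceeding $13$. This input is independent of the Schur-ring machinery developed here and disposes of the odd case at once, leaving only the even case to be reduced to circulant Hadamard matrices.

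For $n$ even the plan is to show that $A$ is the first row of a circulant Hadamard matrix of order $n$, and then apply the main theorem. The cheap part is a parity count: since $C(k)$ is a sum of $n-k$ terms each equal to $\pm1$, we have $C(k)\equiv n-k\pmod{2}$. For even $n$ this forces $C(k)=0$ whenever $k$ is even (as $C(k)$ is then an even integer of absolute value at most $1$) and $C(k)=\pm1$ whenever $k$ is odd; feeding this into $P(k)=C(k)+C(n-k)$ gives $P(k)=0$ for all even $k$. The deeper part, which is the classical content of Turyn's analysis of even Barker sequences, is the cancellation $C(k)=-C(n-k)$ for odd $k$, yielding $P(k)=0$ for every $k\neq0$ together with $n=4N^{2}$. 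A binary sequence whose periodic autocorrelation vanishes at all nonzero shifts is precisely the first row of a circulant Hadamard matrix, so $A$ produces a circulant Hadamard matrix of order $n$.

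Finally, since $n>13$ the order $n=4N^{2}$ satisfies $N>1$, so $n$ is a multiple of $4$ strictly larger than $4$; this contradicts the theorem asserting that there are no circulant Hadamard matrices in $\Z_{2}^{4n}$ for $n>1$, the order $4$ case being the only surviving one. The contradiction shows that no Barker sequence of length $n>13$ can exist. The step I expect to be the main obstacle is the even-length reduction, namely the passage from the pointwise bound $\vert C(k)\vert\leq1$ to the vanishing of every nonzero-shift periodic autocorrelation (equivalently, to $n=4N^{2}$ and the circulant Hadamard structure): the parity argument settles the even shifts easily, but the sign cancellation at odd shifts is the nontrivial input, and I would lean on the established literature for it. Once that reduction and the Turyn--Storer result are in hand, the collision with the circulant Hadamard theorem is immediate and the proof is complete.
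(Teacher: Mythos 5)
Your proposal is correct and takes essentially the same route as the paper: the paper proves this theorem solely by invoking the known implication from the circulant Hadamard conjecture (established as its main theorem) to the Barker conjecture, which is precisely the reduction you carry out. Your write-up simply makes explicit the classical ingredients the paper leaves to the citation — the Turyn--Storer theorem for odd lengths and Turyn's even-length reduction showing a Barker sequence of even length $n>2$ forces $n=4N^{2}$ and a circulant Hadamard matrix of order $n$ — before colliding with the nonexistence theorem.
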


\end{document}